\newcommand{\Bigsum}[2]{\ensuremath{\mathop{\textstyle\sum}_{#1}^{#2}}}
\newcommand{\czero}{\ensuremath{\operatorname{c}_0}}
\newcommand{\columnvec}[2]{\genfrac{[}{]}{0pt}{}{\,#1\,}{#2}}
\DeclareMathSymbol{\shortminus}{\mathbin}{AMSa}{"39}
\newcommand{\BAN}{\operatorname{\mathsf{BAN}}\nolimits}
\newcommand{\NOR}{\operatorname{\mathsf{NOR}}\nolimits}
\newcommand{\SNOR}{\operatorname{\mathsf{SNOR}}\nolimits}
\newcommand{\FRE}{\operatorname{\mathsf{FRE}}\nolimits}
\newcommand{\TVS}{\operatorname{\mathsf{TVS}}\nolimits}
\newcommand{\HDTVS}{\operatorname{\mathsf{HD-TVS}}\nolimits}
\newcommand{\LCS}{\operatorname{\mathsf{LCS}}\nolimits}
\newcommand{\HDLCS}{\operatorname{\mathsf{HD-LCS}}\nolimits}
\newcommand{\NUC}{\operatorname{\mathsf{NUC}}\nolimits}
\newcommand{\NUCFRE}{\operatorname{\mathsf{FN}}\nolimits}
\newcommand{\LB}{\operatorname{\mathsf{LB}}\nolimits}
\newcommand{\BOR}{\operatorname{\mathsf{BOR}}\nolimits}
\newcommand{\COM}{\operatorname{\mathsf{COM}}\nolimits}
\newcommand{\FS}{\operatorname{\mathsf{FS}}\nolimits}
\newcommand{\FH}{\operatorname{\mathsf{FH}}\nolimits}
	\renewcommand{\andify}{%
		\nxandlist{\unskip, }{\unskip{} \@@and~}{\unskip \penalty-2 \space \@@and~}}
	\renewcommand\author@andify{%
  		\nxandlist {\unskip ,\penalty-1 \space\ignorespaces}%
		{\unskip {} \@@and~}%
		{\unskip \penalty-2 \space \@@and~}
	}
\DeclareRobustCommand{\myblacksquare}{%
  \makebox[.5em]{\vrule height 1.00ex depth -.15ex width 0.9ex}}
	\renewcommand{\phi}{\varphi}
	\renewcommand{\Im}{\operatorname{Im}\nolimits}
	\providecommand{\corollaryname}{Corollary}
	\providecommand{\definitionname}{Definition}
	\providecommand{\examplename}{Example}
	\providecommand{\lemmaname}{Lemma}
	\providecommand{\propositionname}{Proposition}
	\providecommand{\remarkname}{Remark}
	\providecommand{\theoremname}{Theorem}
	\providecommand{\setupname}{Setup}
	\providecommand{\conjecturename}{Question}
	\providecommand{\scholiumname}{Scholium}
	\providecommand{\questionname}{Question}
	\theoremstyle{plain}
		\newtheorem{thm}{\protect\theoremname}[section] 
		\newtheorem{prop}[thm]{\protect\propositionname}
		\newtheorem{cor}[thm]{\protect\corollaryname}
	\theoremstyle{definition}
		\newtheorem{defn}[thm]{\protect\definitionname}
	\theoremstyle{definition}
		\newtheorem{rem}[thm]{\protect\remarkname}
	\numberwithin{figure}{section}
	\numberwithin{equation}{section}
	\newenvironment{acknowledgements}{
		\begin{abstract}} {\end{abstract}}
	\tikzset{commutative diagrams/.cd, 
		mysymbol/.style = {start anchor=center, end anchor = center, draw = none}}
	\newcommand{\commutes}[2][\circ]{\arrow[mysymbol]{#2}[description]{#1}}
	\newcommand{\BE}{\mathbb{E}}
	\newcommand{\CA}{\mathcal{A}}
	\newcommand{\CB}{\mathcal{B}}
	\newcommand{\CC}{\mathcal{C}}
	\newcommand{\CE}{\mathcal{E}}
	\newcommand{\CX}{\mathcal{X}}
	\newcommand{\Ker}{\operatorname{Ker}\nolimits}
	\newcommand{\Coim}{\operatorname{Coim}\nolimits}
	\newcommand{\obj}{\operatorname{obj}\nolimits}
	\newcommand{\Hom}{\operatorname{Hom}\nolimits}	
	\newcommand{\End}{\operatorname{End}\nolimits}
	\newcommand{\op}{\mathrm{op}}
	\newcommand{\rMod}[1]{\operatorname{\mathsf{Mod}}\nolimits{#1}}
\begin{document}
\title
[(Non-)examples of integral categories and the A.I.\ property]%
{Examples and non-examples of integral categories and the admissible intersection property}
\author[Hassoun]{Souheila Hassoun}
    \address{D\'{e}partement de math\'{e}matiques \\
        Universit\'{e} de Sherbrooke \\ 
        Sherbrooke, Qu\'{e}bec, J1K 2R1 \\ 
        Canada
        }
    \email{souheila.hassoun@usherbrooke.ca}
\author[Shah]{Amit Shah}
    \address{School of Mathematics, Statistics and Physics\\
        Newcastle University\\
        Newcastle upon Tyne, NE1 7RU\\
        United Kingdom
        }
    \email{amit.shah@newcastle.ac.uk}

\author[Wegner]{Sven-Ake Wegner}
    \address{Department of Mathematics\\
        University of Hamburg\\
        Bundesstra\ss{}e 55\\
        20146 Hamburg\\
        Germany
        }
    \email{sven.wegner@uni-hamburg.de}

\date{\today\vspace{2pt}}
\keywords{Integral category, 
quasi-abelian category,  
projective object, 
quasi-projective object, 
topological vector space, 
bornological vector space, 
exact category, 
admissible intersections. 
}
\subjclass[2020]{%
Primary 18E05; 
Secondary 18E10, 46A08, 46A13, 46A17, 46M10, 46M15%
\vspace{1.5pt}}
\dedicatory{Dedicated to Prof.\ Wolfgang Rump on the occasion of his $70$th birthday}

\begin{abstract}
%
%
%
Integral categories form a sub-class of pre-abelian categories whose systematic study was initiated by Rump in 2001. 
In the first part of this article we determine whether several categories of topological and bornological vector spaces are integral.
Moreover, we establish that the class of integral categories is not contained in the class of quasi-abelian categories, and that there exist semi-abelian categories that are neither integral nor quasi-abelian. 
In the last part of the article we show that a category is quasi-abelian if and only if it has admissible intersections, in the sense considered recently by Br{\"u}stle, Hassoun and Tattar. 
This exhibits that a rich class of non-abelian categories having this property arises naturally in functional analysis.
\end{abstract}

\maketitle


\section{Introduction}
\label{SEC:INTRO}

Since the 1960s there has been much research on additive, non-abelian categories. This has led to the development of a spectrum of classes of categories ranging from pre-abelian to abelian. Our goal in this note is to explain the following diagram:\label{PIC}\vspace{0.5cm}

\[
\makebox[\textwidth][c]{
\hspace{-20pt}
\begin{picture}(300,100)(0,0)

\put(0,0){
\begin{tikzpicture}
\draw [fill=white!20, rounded corners](0,0) rectangle (11.4,4.38);
\end{tikzpicture}
}
\put(267,112){pre-abelian}

\put(3,3){
\begin{tikzpicture}
\draw [fill=white!20, rounded corners](0,0) rectangle (8.8,4.17);
\end{tikzpicture}
}
\put(190,109){semi-abelian}

\put(6,20){
\begin{tikzpicture}
\draw [fill=white!20, rounded corners](0,0) rectangle (6,3.46);
\end{tikzpicture}
}
\put(110,106){quasi-abelian}

\put(61,6){
\begin{tikzpicture}
\draw [rounded corners](0,0) rectangle (6,2.3);
\end{tikzpicture}
}
\put(191,58){integral}

\put(64,42.25){
\begin{tikzpicture}
\draw [fill=white!20, rounded corners](0,0) rectangle (2,0.91);
\end{tikzpicture}
}
\put(74,55){abelian}

\put(15,108){$\bullet$}\put(-30,112){\small$\BAN$}
\put(15,97.5){$\bullet$}\put(-30.6,98.5){\small$\NOR$}
\put(15,87){$\bullet$}\put(-27.5,85){\small$\FRE$}
\put(15,76.5){$\bullet$}\put(-48.5,71.5){\small$\HDTVS$}
\put(15,66){$\bullet$}\put(-46.75,58){\small$\HDLCS$}
\put(15,55){$\bullet$}\put(-30,44.5){\small$\NUC$}
\put(15,45){$\bullet$}\put(-20.9,31){\small$\NUCFRE$}
\put(15,34.5){$\bullet$}\put(-19,17.5){\small$\FS$}
\put(15,24){$\bullet$}\put(-21,4){\small$\FH$}

\put(39.5,24){$\myblacksquare$}
    \put(4,-18.5){\small$\widehat{\mathcal{B}}c$}
\put(54.5,24){$\myblacksquare$}
    \put(24,-18.5){\small$\wideparen{\mathcal{B}}c$}
\put(70,24){$\myblacksquare$}
    \put(44,-18.5){\small$\mathcal{B}c$}

\put(126,24){$\bullet$}
    \put(64,-18.5){\small$\SNOR$}
\put(140,24){$\bullet$}
    \put(99,-18.5){\small$\TVS$}
\put(154,24){$\bullet$}
    \put(125,-18.5){\small$\LCS$}
\put(168,24.5){$\scalerel*{\blacktriangledown}{\bullet}$}
    \put(150,-18.5){{\small$\CC/[\CX_{R}]$}}

\put(224,10){$\bullet$}
    \put(191,-18.5){\small$\BOR$}

\put(244,10){$\bullet$}
    \put(221,-18.5){{\small$\BAN\times\BOR$}}


\put(305,4.5){$\bullet$}
    \put(287,-18.5){\small$\LB$}
\put(320,4.5){$\bullet$}
    \put(305,-18.5){\small$\COM$}

\put(-8,-6){
\begin{tikzpicture}
  \phantom{\draw[style=help lines] (-1,0) grid (11,5);}
    
    \draw[thick] (-1,4.25) -- (-0.4,4.14); 
    \draw[thick] (-1,3.8) -- (-0.4,3.75); 
    \draw[thick] (-1,3.35) -- (-0.4,3.35); 
    \draw[thick] (-1,2.85) -- (-0.4,2.98); 
    \draw[thick] (-1,2.43) -- (-0.4,2.6); 
    \draw[thick] (-1,1.95) -- (-0.4,2.2); 
    \draw[thick] (-1,1.44) -- (-0.4,1.85); 
    \draw[thick] (-1,1.03) -- (-0.4,1.48); 
    \draw[thick] (-1,0.55) -- (-0.4,1.07); 

    \draw[thick] (-0.38,0) -- (0.55,1.02); 
  \draw[thick] (0.32,0) -- (1.1,1.02); 
  \draw[thick] (1.01,0) -- (1.65,1.02); 

  \draw[thick] (2.0,0) -- (3.55,1.06); 
  \draw[thick] (3.05,0) -- (4.08,1.05); 
  \draw[thick] (3.9,0) -- (4.6,1.05); 
  \draw[thick] (4.9,0) -- (5.15,1.02); 
  
  \draw[thick] (6.35,0) -- (7.04,0.56); 
  \draw[thick] (7.87,0) -- (7.82,0.54); 
    
  \draw[thick] (9.55,0) -- (9.88,0.35); 
  \draw[thick] (10.28,0) -- (10.45,0.35); 
\end{tikzpicture}
}

\end{picture}
}\vspace{0.8cm}
\]
We focus on the concrete examples from functional analysis and representation theory that appear therein. 
We refer the reader to \S\S\ref{SEC:PRE-ABELIAN-CATEGORIES}--\ref{SEC:BVS} for precise definitions.

A \emph{pre-abelian} category is an additive category in which every morphism has a kernel and a cokernel. 
Within the class of pre-abelian categories, one defines the following notions. 
\emph{Semi-abelian} categories are the ones in which the canonical morphism between the coimage and image is always both monic and epic, but not necessarily an isomorphism as one would expect in an abelian category. 
\emph{Quasi-abelian} categories are the ones in which kernels are stable under pushout and cokernels are stable under pullback. 
On the other hand, \emph{integral} categories are the ones in which monomorphisms are stable under pushout and epimorphisms are stable under pullback. 
One can check that the implications
\[
\begin{tikzcd}[row sep=0.5cm]
&\text{quasi-abelian}\arrow[Rightarrow, start anchor=south east, end anchor=north west,pos=0.4]{dr}{(*)}&&\\
\text{abelian} \arrow[Rightarrow, start anchor=north east, end anchor=south west]{ur}\arrow[Rightarrow, start anchor=south east, end anchor=north west]{dr}&& \text{semi-abelian} \arrow[Rightarrow]{r}&\text{pre-abelian}\\
&\text{integral}\arrow[Rightarrow, start anchor=north east, end anchor=south west]{ur}&&
\end{tikzcd}
\]
are relatively straightforward; see e.g.\ Rump \cite{Rump01-almost-abelian-categories}. 
A conjecture of Ra\u{\i}kov, which has been solved in the negative, was that the converse of $(*)$ holds; see Remark \ref{REM-RAIKOV-BOR}.

Despite recent progress on integral categories, which appears to be predominantly in algebra (see Remark \ref{rem:on-integral-categories}), the question if there exist integral categories that are not quasi-abelian seems to date to be open. 
We answer this positively; see Corollary \ref{cor:integral-not-contained-in-quasi-abelian}. 
With an idea communicated to the authors by J.\ Wengenroth, we also prove that the class of semi-abelian categories is not merely the union of the classes of integral and quasi-abelian categories; see Theorem \ref{thm:semi-category-that-is-not-integral-or-quasi-abelian}. 
Furthermore, we systematically investigate integrality for many examples found in the functional analyst's category theory toolbox; see Theorems \ref{THM-0}--\ref{THM-BOR}, \ref{THM-2} and \ref{THM-3}. 
As a consequence of these results, we derive that most of the categories in the diagram on p.\ \pageref{PIC} have neither enough projectives nor enough injectives; see Theorem \ref{thm:categories-dont-have-enough-quasi-projs-injs}.

Non-abelian categories appear in abundance in functional analysis and have applications for instance in the theory of partial differential equations; see Wengenroth \cite{Wengenroth03}, and Frerick and Sieg \cite{FrerickSieg}, and the references therein. 
Indeed, as can be seen from the diagram on p.\ \pageref{PIC}, most of the categories we study here are quasi-abelian but not abelian. 
However, this is still enough intrinsic structure to conduct homological algebra as Schneiders \cite{Schneiders} did. 
He also observed that on each quasi-abelian category the class of all kernel-cokernel pairs forms an \emph{exact structure} in the sense of Quillen \cite{Quillen} (see also Yoneda's `quasi-abelian $\mathscr{S}$-categories' \cite{Yoneda61}). 
In contrast to the internal structure of a category, like pre-, semi- and quasi-abelian, an exact structure is extrinsic. 

In studying lengths of objects in exact categories, Br{\"u}stle, Hassoun, Langford and Roy \cite[Exam.\ 6.9]{BHLR-reduction} showed that an analogue of the classic Jordan-H{\"o}lder property can fail for an arbitrary exact category; see also Enomoto \cite{Enomoto}. 
Motivated partly by this, Br{\"u}stle, Hassoun and Tattar \cite{BHT} have recently considered additive categories with a mix of intrinsic and extrinsic structures. 
More specifically, they consider pre-abelian categories equipped with an exact structure that has `admissible intersections'; see \S\ref{SEC:AIP}. 
Building on their groundwork, we show in Theorem \ref{thm:AI-iff-quasi-abelian} that this property is satisfied if and only if the category is quasi-abelian, thereby giving a new characterisation for quasi-abelian categories.

\medskip

\section{A reminder on pre-abelian categories}
\label{SEC:PRE-ABELIAN-CATEGORIES}

We recall some definitions of additive categories more general than abelian ones. 
For more details we refer the reader to \cite{Rump01-almost-abelian-categories}. 
Recall that an additive category is called \emph{pre-abelian} if every morphism has a kernel and a cokernel. 

For the remainder of this section, let $\CA$ be a pre-abelian category.
\begin{defn}
\label{def:semi-abelian-category}\cite[p.\ 167]{Rump01-almost-abelian-categories}
If each morphism $f\colon A\to B$ in $\CA$ can be expressed as $f=i\circ{}p$ for some monomorphism $i$ and some cokernel $p$, then $\CA$ is said to be \emph{left semi-abelian}.
Dually, if each morphism $f$ can be written as $f=i\circ{}p$ for some kernel $i$ and some epimorphism $p$, then $\CA$ is said to be \emph{right semi-abelian}.
If $\CA$ is both left and right semi-abelian, then it is called \emph{semi-abelian}.
\end{defn}
\begin{defn}
\label{def:stable-under-PB-or-PO}
Let $\CX$ be a class of morphisms in $\CA$. 
We say that $\CX$ is \emph{stable under pullback} if, in any pullback square 
\begin{equation*}
\label{eqn:stable-under-PO-PB}
\begin{tikzcd}
A\arrow{r}{a}\arrow{d}[swap]{b} \commutes[\text{PB}]{dr}& B \arrow{d}{c}\\ C \arrow{r}[swap]{d} & D
\end{tikzcd}
\end{equation*}
$a$ is in $\CX$ whenever $d$ is in $\CX$. 
Being \emph{stable under pushout} is defined dually.
\end{defn}
\begin{defn}
\label{def:quasi-abelian-category}\cite[p.\ 168]{Rump01-almost-abelian-categories}
If cokernels in $\CA$ are stable under pullback, then $\CA$ is called \emph{left quasi-abelian}.
Dually, if kernels in $\CA$ are stable under pushout, then $\CA$ is called \emph{right quasi-abelian}. 
If $\CA$ is both left and right quasi-abelian, then it is called \emph{quasi-abelian}.
\end{defn}
We note here that different authors, in particular Palamodov \cite{Palamodov68, Palamodov71} and Ra\u{\i}kov \cite{Raikov}, have used the above notions in different senses. 
For example, we follow Palamodov in the use of `semi-abelian'; see also Kopylov and Wegner \cite{KopylovWegner12} for different characterisations. 
In the non-additive setting, this same name is used to describe a category that is pointed Barr-exact protomodular, admitting binary coproducts; see Janelidze, M{\'a}rki and Tholen \cite{JanelidzeMarkiTholen}.
We refer to the introductions of \cite{Rump11}, \cite{KopylovWegner12}, and \cite{Wengenroth12} for historic references. 
\begin{defn}
\label{def:integral-category}\cite[p.\ 168]{Rump01-almost-abelian-categories}
If epimorphisms in $\CA$ are stable under pullback, then $\CA$ is called  \emph{left integral}. 
Dually, if monomorphisms in $\CA$ are stable under pushout, then $\CA$ is called \emph{right integral}. 
If $\CA$ is both left and right integral, then it is called \emph{integral}.
\end{defn}
Certain relationships between the categories defined above can then be established. 
\begin{prop}\label{prop:quasi-abelian-or-integral-implies-semi-abelian}
\cite[p.\ 169, Cor.\ 1]{Rump01-almost-abelian-categories} \vspace{-4pt}
\begin{compactitem}
    \item[\emph{(i)}] If $\CA$ is a left (respectively, right) quasi-abelian category, then it is left (respectively, right) semi-abelian.\vspace{3pt} 
    \item[\emph{(ii)}] If $\CA$ is a left (respectively, right) integral category, then it is left (respectively, right) semi-abelian.
\end{compactitem}
\end{prop}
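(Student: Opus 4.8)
Since a pre-abelian category is right quasi-abelian (respectively, right integral, right semi-abelian) exactly when its opposite is left quasi-abelian (respectively, left integral, left semi-abelian), the plan is to prove only the left-hand implications, i.e.\ that a pre-abelian category $\CA$ which is left quasi-abelian \emph{or} left integral is left semi-abelian. So fix a morphism $f\colon A\to B$ in $\CA$; the goal is a factorisation $f=i\circ p$ with $i$ monic and $p$ a cokernel. I would take for $p$ the canonical projection onto the coimage, $p\deff\cok(\ker f)\colon A\to\coim f$, which is a cokernel by construction; since $f\circ\ker f=0$ there is a unique $\tilde f\colon\coim f\to B$ with $f=\tilde f\circ p$. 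Because $p$ is already a cokernel, everything reduces to showing that $\tilde f$ is a monomorphism, and this reduction uses neither hypothesis.

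To prove $\tilde f$ monic I would work straight from the definition. Suppose $u\colon Z\to\coim f$ satisfies $\tilde f\circ u=0$, and form the pullback of $p$ along $u$ (it exists, as a pre-abelian category has all finite limits), giving $y\colon Y\to A$ and $\rho\colon Y\to Z$ with $p\circ y=u\circ\rho$. The only place the hypotheses enter is the assertion that $\rho$ is an epimorphism: if $\CA$ is left integral this holds because $p$ is a cokernel, hence an epimorphism, and epimorphisms are pullback-stable; if $\CA$ is left quasi-abelian it holds because $p$ is a cokernel and cokernels are pullback-stable, so that $\rho$ is itself a cokernel and in particular epic. Granted that, the rest is a short chase: $f\circ y=\tilde f\circ p\circ y=\tilde f\circ u\circ\rho=0$, so $y$ factors through $\ker f$, whence $u\circ\rho=p\circ y=0$; cancelling the epimorphism $\rho$ gives $u=0$. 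Thus $\tilde f$ is monic, the factorisation $f=\tilde f\circ p$ has the required form, and $\CA$ is left semi-abelian.

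I do not anticipate a genuine obstacle: the argument is short, and its one real insight is that left quasi-abelianness and left integrality are each precisely what is needed to force one specific pullback of the coimage projection to be epic --- so that parts (i) and (ii) coincide apart from that single line. The only points worth a word of justification are the existence of the pullback used (immediate from pre-abelianness) and the elementary fact that, in an additive category, a morphism $g$ is monic as soon as $g\circ w=0$ entails $w=0$, which is exactly what the closing chase relies on.
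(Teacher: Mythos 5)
Your argument is correct: the reduction by duality is legitimate, the factorisation $f=\tilde f\circ\cok(\ker f)$ always exists in a pre-abelian category, and your pullback chase correctly isolates the single point where each hypothesis is used (the pullback $\rho$ of the cokernel $p$ is epic, either because epimorphisms are pullback-stable or because cokernels are). The paper itself gives no proof, citing Rump instead, and your argument is essentially the standard one found there, so there is nothing to add.
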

It follows then that the classes of quasi-abelian and integral categories are both contained in the class of semi-abelian categories.
\begin{prop}
\label{prop:semi-abelian-implies-left-right-equivalence}
\emph{\cite[Prop.\ 3 and p.\ 173, Cor.]{Rump01-almost-abelian-categories}}
Suppose $\CA$ is semi-abelian. \vspace{-4pt}
\begin{compactitem}
    \item[\emph{(i)}] 
        The category $\CA$ is left quasi-abelian if and only if it is right quasi-abelian.\vspace{3pt}
    \item[\emph{(ii)}]  
        The category $\CA$ is left integral if and only if it is right integral.
\end{compactitem}
\end{prop}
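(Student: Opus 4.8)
The plan rests on a self-duality reduction together with one key construction. I describe part~(i); part~(ii) is proved by the same scheme, with ``kernel''/``cokernel'' replaced throughout by ``monomorphism''/``epimorphism'' and Definition~\ref{def:integral-category} used in place of Definition~\ref{def:quasi-abelian-category}. First note that $\CA$ is semi-abelian if and only if $\CA^{\op}$ is, and that under $(-)^{\op}$ kernels and cokernels are interchanged, as are pullbacks and pushouts; hence ``$\CA$ is left quasi-abelian'' is equivalent to ``$\CA^{\op}$ is right quasi-abelian'' (and similarly for the integral notions). It therefore suffices to prove, for an \emph{arbitrary} semi-abelian category, the implication ``left quasi-abelian $\Rightarrow$ right quasi-abelian''; applying this to $\CA^{\op}$ then yields the converse for $\CA$.

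So let $\CA$ be semi-abelian and left quasi-abelian; by Proposition~\ref{prop:quasi-abelian-or-integral-implies-semi-abelian}(i) it is also left semi-abelian. Given a kernel $k\colon K\to A$ and an arbitrary morphism $u\colon K\to L$, form the pushout
\[
\begin{tikzcd}
K\arrow{r}{k}\arrow{d}[swap]{u} \commutes[\text{PO}]{dr} & A\arrow{d}{v}\\
L\arrow{r}[swap]{k'} & P
\end{tikzcd}
\]
We must show $k'$ is a kernel. Writing $k=\ker g$ and factoring $g=i\circ p$ with $i$ monic and $p$ a cokernel (left semi-abelianity), one has $\ker g=\ker p$ and $p=\cok(\ker p)$, so we may assume that $(k,c)$ is a kernel--cokernel pair with $c\deff\cok k\colon A\to C$. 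An elementary diagram chase, using only that $P=\Cok\columnvec{k}{-u}$ and that $c$ is epic, produces a unique morphism $c'\colon P\to C$ with $c'v=c$ and $c'k'=0$ and shows $c'=\cok k'$; in particular $\cok k'\cong\cok k$. Put $\mu\deff\ker c'\colon\Im k'\to P$ and let $e\colon L\to\Im k'$ be the unique morphism with $\mu\circ e=k'$. Since $\CA$ is semi-abelian, $e$ is epic; and $k'$ is a kernel if and only if $e$ is an isomorphism. This is what remains to be shown.

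To bring in left quasi-abelianity, form the pullback $Q\deff P\times_C A$ of $c'$ along $c$: since $c$ and $c'$ are cokernels, left quasi-abelianity forces both projections $q_1\colon Q\to P$ and $q_2\colon Q\to A$ to be cokernels. The relation $c'v=c$ provides a section of $q_2$, so $Q\cong A\oplus\Im k'$, the second summand being $\ker q_2\cong\Im k'$; under this identification $q_1$ becomes a cokernel $[\,v\ \ \mu\,]\colon A\oplus\Im k'\to P$ whose kernel one computes to be $\columnvec{k}{-eu}$. A diagram chase from here --- using left quasi-abelianity once more --- then shows that $e$ is an isomorphism, so $k'=\mu\circ e$ is a kernel. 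Hence $\CA$ is right quasi-abelian, and (ii) follows by the same argument with monomorphisms and epimorphisms in place of kernels and cokernels (and Definition~\ref{def:integral-category} in place of Definition~\ref{def:quasi-abelian-category}).

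\textbf{Main obstacle.}
The self-duality reduction and the bookkeeping around the pushout square are routine. The crux is the last step: semi-abelianity alone already makes $e$ epic, but it makes $e$ neither monic nor invertible in general. What left quasi-abelianity must supply is exactly the passage to an isomorphism --- which in particular forces $k'$ to be monic --- and organising the fibre product $P\times_C A$ (together with a companion pullback) so that this isomorphism falls out is the delicate point. It is precisely the step that fails for a general semi-abelian category, and that compels the left-hand and right-hand conditions to coincide.
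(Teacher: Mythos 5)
The paper does not prove this proposition; it imports it from Rump \cite[Prop.\ 3 and p.\ 173, Cor.]{Rump01-almost-abelian-categories}, so your attempt can only be judged on its own terms. The duality reduction is fine, and so is the setup: $c'=\cok k'$ exists with $c'v=c$, the factorisation $k'=\mu e$ through $\Im k'=\Ker c'$ has $e$ epic by (right) semi-abelianity, and $k'$ is a kernel iff $e$ is an isomorphism. The problem is that from that point on your argument never actually uses left quasi-abelianity in a non-vacuous way, and the step that would prove the theorem is the one you omit.

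Concretely: (a) $q_{2}\colon Q\to A$ admits the section induced by $c'v=c$, and a split epimorphism in a pre-abelian category is automatically the cokernel of its kernel, so invoking left quasi-abelianity for $q_{2}$ proves nothing new; (b) the identity $[\,v\ \ \mu\,]=\cok\columnvec{k}{-eu}$ also needs no quasi-abelianity --- given any $[\,s\ \ t\,]$ with $sk=teu$, the pair $[\,s\ \ te\,]$ kills $\columnvec{k}{-u}$, hence factors uniquely through $[\,v\ \ \mu e\,]=\cok\columnvec{k}{-u}$, and epicness of $e$ upgrades $w\mu e=te$ to $w\mu=t$. So every statement you derive before the words ``a diagram chase from here'' is a theorem about arbitrary semi-abelian categories. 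But the conclusion is false for arbitrary semi-abelian categories: $\BOR$ is semi-abelian and not right quasi-abelian (Theorem \ref{THM-BOR}), so in $\BOR$ your entire configuration exists with $e$ epic but not invertible. Hence no diagram chase from the data you have assembled can show $e$ is an isomorphism; the unexplained ``using left quasi-abelianity once more'' must contain the whole content of the proposition (in essence, that in a semi-abelian left quasi-abelian category the pushout $k'$ of a kernel is monic), and neither the statement to which it is applied nor the argument is given. Your closing paragraph acknowledges this is ``the delicate point'' and alludes to ``a companion pullback'' that never appears in the proof; as written, the proposal is an accurate reduction of the problem, not a proof of it. The same gap propagates to part (ii).
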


We conclude this section with the following remark on integral categories. 
\begin{rem}\label{rem:on-integral-categories}
Although Rump introduced the name `integral' for a category, such categories were known to B{\u{a}}nic{\u{a}} and Popescu \cite{BanicaPopescu}. 
By extending results of \cite{BanicaPopescu}, Rump proved that a pre-abelian category is integral if and only if it admits a faithful embedding into an abelian category which preserves kernel-cokernel pairs; see \cite[Prop.\ 7]{Rump01-almost-abelian-categories}.
In particular, he observed that the class of simultaneously monic and epic morphisms in an integral category admits a \emph{calculus of fractions} in the sense of Gabriel and Zisman \cite{GabrielZisman}, and hence the (canonical) localisation of the category at this class is an abelian category. 
This is one reason why integral categories have gained popularity among representation theorists:\vspace{-4pt}
\begin{compactitem}
\item[(i)] Rump \cite{Rump01-star-modules-tilting, Rump04-categories-of-lattices, Rump04-differentiation, Rump06} himself showed, among other things, that the torsion-free class of a hereditary torsion theory in an abelian category is integral. \vspace{3pt} 

\item[(ii)] Buan and Marsh \cite{BuanMarsh} showed that, for a certain triangulated category $\CC$ and a rigid object $R\in\CC$, the quotient category $\CC/[\CX_{R}]$, where $\CX_R = \Ker\Hom_{\CC}(R,-)$, is integral. 
From this they proved that the canonical localisation of $\CC/[\CX_{R}]$ is a module category over $(\End_{\CC}R)^{\op}$. \vspace{3pt} 

\item[(iii)] By introducing \emph{hearts} of twin cotorsion pairs on triangulated categories, Nakaoka \cite{Nakaoka} generalised this construction of $\CC/[\CX_{R}]$. 
He showed that the heart is always semi-abelian and gave a sufficient condition for it to be integral. 
A condition for the heart to be quasi-abelian was given by Shah \cite{Shah}. 
Furthermore, analogous concepts have been studied by Liu \cite{LiuY} for exact categories, and by Liu and Nakaoka \cite{LiuYNakaoka}, and Hassoun and Shah \cite{HassounShah} for \emph{extriangulated} categories (in the sense of Nakaoka and Palu \cite{NakaokaPalu}). 
\end{compactitem}
\end{rem}
%

\medskip


\section{Categories of topological vector spaces}
\label{SEC:TVS}

In this section we look at categories of \emph{topological vector spaces}. The objects of such a category are pairs $(X,\tau)$, where $X$ is a vector space and $\tau$ is a topology on $X$ that makes the vector space operations continuous. The morphisms are continuous linear maps. 
For unexplained notation from functional analysis we refer the reader to Meise and Vogt \cite{MeiseVogt}.

Our first result extends Rump's observation \cite[\S 2.2]{Rump01-almost-abelian-categories} that the topological abelian groups form an integral category.

\begin{thm}\label{THM-0} Let $k\in\{\mathbb{R},\mathbb{C}\}$ be fixed. The categories \vspace{-4pt}
\begin{compactitem}
\item[\emph{(i)}]  $\SNOR$ of semi-normed spaces;\vspace{3pt}
\item[\emph{(ii)}] $\LCS$ of (Hausdorff and non-Hausdorff) locally convex spaces; and\vspace{3pt}
\item[\emph{(iii)}]$\TVS$ of (Hausdorff and non-Hausdorff) topological vector spaces\vspace{-4pt}
\end{compactitem}
over $k$, each furnished with linear and continuous maps as morphisms, are quasi-abelian and integral.
\end{thm}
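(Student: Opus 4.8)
The plan is to treat $\SNOR$, $\LCS$ and $\TVS$ as uniformly as possible, separating the three cases only at the very last step. Throughout one uses that each of these categories is pre-abelian: finite products (which agree with finite coproducts) are formed on the underlying vector spaces with the product topology, respectively the maximum semi-norm; the kernel of $f\colon X\to Y$ is $f^{-1}(0)$ with the induced structure; and, crucially because non-Hausdorff objects are admitted, the cokernel of $f$ is the purely algebraic quotient $Y/f(X)$ with the quotient topology, respectively quotient semi-norm, \emph{no closure being taken}. Likewise, pullbacks are the fibre products $B\times_{D}C=\{(b,c')\in B\oplus C : c(b)=d(c')\}$ with the induced structure, and pushouts are the quotients $(B\oplus C)/\{(a(x),-b(x)) : x\in A\}$; in all three categories these again lie in the category by the standard stability of subspaces, products and quotients.

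First I would pin down the monomorphisms and epimorphisms. A morphism is monic exactly when its kernel vanishes, i.e.\ when it is injective; dually, in any pre-abelian category a morphism is epic exactly when its cokernel vanishes, and here the cokernel of $f$ is $Y/f(X)$, which is the zero object precisely when $f(X)=Y$, so the epimorphisms are exactly the surjections. (This is the one place the non-Hausdorff convention is essential: in the Hausdorff variants the cokernel is $Y/\overline{f(X)}$ and the epimorphisms are the dense-image maps, and those categories are not integral.) It follows that for every $f\colon X\to Y$ the canonical comparison $\coim f=X/\ker f\to f(X)=\im f$ is a continuous linear bijection, hence both monic and epic; inserting this into the factorisation $X\twoheadrightarrow\coim f\to\im f\hookrightarrow Y$ exhibits $f$ both as (monic)$\circ$(cokernel) and as (kernel)$\circ$(epic), so by Definition \ref{def:semi-abelian-category} each category is left and right semi-abelian, and Proposition \ref{prop:semi-abelian-implies-left-right-equivalence} becomes available. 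Integrality is then immediate: the fibre-product projection $B\times_{D}C\to B$ is surjective whenever $d$ is, so epimorphisms are stable under pullback; and, dually, if $b$ is injective then the pushout structure map $B\to(B\oplus C)/\{(a(x),-b(x))\}$ is injective, so monomorphisms are stable under pushout.

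It remains to prove quasi-abelianness; by Proposition \ref{prop:semi-abelian-implies-left-right-equivalence}(i) it suffices to show that cokernels are stable under pullback. In $\TVS$ and $\LCS$ the cokernels are precisely the open surjections, and if $d$ is open then the fibre-product projection $a\colon B\times_{D}C\to B$ is open as well, since on basic open sets $a(A\cap(V\times W))=V\cap c^{-1}(d(W))$; being also surjective, $a$ is then an open surjection, hence a cokernel. In $\SNOR$ the cokernels are the surjections $d\colon C\to D$ admitting a constant $\delta>0$ with $\inf\{\|x\|_{C} : d(x)=y\}\leq\delta\|y\|_{D}$ for all $y\in D$, and a short computation with the maximum semi-norm on $B\times_{D}C$ shows the pullback projection inherits this property with $\delta'=\max(1,\delta\|c\|)$. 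Thus cokernels are stable under pullback in all three categories, so by Proposition \ref{prop:semi-abelian-implies-left-right-equivalence}(i) they are quasi-abelian.

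The only step carrying real content is the last paragraph: one has to identify the cokernels correctly (as open, respectively quantitatively open, surjections) and check that this class is closed under pullback, while everything else reduces to first-isomorphism-theorem bookkeeping together with the non-Hausdorff convention. An alternative to running the three cases in parallel is to establish the theorem for $\TVS$ and then observe that $\LCS$, and in turn $\SNOR$, sit inside $\TVS$ as full additive subcategories closed under the ambient kernels, cokernels, pullbacks and pushouts; then monomorphisms, epimorphisms, kernels and cokernels coincide with those computed in the larger category, and both properties pass to the subcategory.
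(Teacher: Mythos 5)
Your proof is correct. For the integrality claim your argument is essentially the paper's: identify kernels as $f^{-1}(0)\hookrightarrow X$ and cokernels as the algebraic quotient $Y\to Y/f(X)$ (no closure, because non-Hausdorff objects are admitted), deduce that monomorphisms are the injections and epimorphisms the surjections, and then observe that pullbacks and pushouts compute algebraically as in $\rMod{k}$, so that surjectivity passes to the fibre-product projection and injectivity to the pushout leg. The difference lies in the quasi-abelian half: the paper simply cites the literature (Schneiders, Prosmans, Frerick--Sieg), whereas you give a self-contained proof by identifying cokernels with open surjections (quantitatively open surjections in $\SNOR$) and checking that this class is closed under pullback via the identity $a(A\cap(V\times W))=V\cap c^{-1}(d(W))$; combined with your explicit verification of semi-abelianness through the coimage--image factorisation, this lets you invoke Proposition \ref{prop:semi-abelian-implies-left-right-equivalence}(i) to get right quasi-abelianness for free. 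Your route is longer but self-contained and makes the openness mechanism visible; the paper's is shorter at the cost of external references. Your closing remark about deducing $\LCS$ and $\SNOR$ from $\TVS$ by fullness and closure under the relevant (co)limits is also sound and mirrors the reflection arguments the paper uses elsewhere (e.g.\ in the proof of Theorem \ref{THM-1}).
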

\begin{proof}
It is well-known that all three categories are quasi-abelian; see e.g.\ \cite[Prop.\ 3.2.4]{Schneiders}, Prosmans \cite[Prop.\ 2.1.11]{Prosmans00} and \cite[Exam.\ 4.14]{FrerickSieg}. 
In $\SNOR$, $\LCS$ and $\TVS$ the kernel of a morphism $f\colon X\rightarrow Y$ is the inclusion $f^{-1}(0)\rightarrow X$, where $f^{-1}(0)$ is furnished with the induced topology. 
Denote by $\operatorname{ran}f$ the range of $f$. 
Then the cokernel of $f$ is the quotient $Y\rightarrow Y/\operatorname{ran}f$ with the quotient topology; see e.g.\ \cite[Lem.\ 3.2.3]{Schneiders}, \cite[Prop.\ 2.1.8]{Prosmans00} and \cite[Exam.\ 2.14]{FrerickSieg}. 
Thus, $f$ is monic if and only if $f$ is injective, and $f$ is epic if and only if $f$ is surjective. Since pushouts and pullbacks compute algebraically precisely as in $\rMod{k}$, the two conditions in Definition \ref{def:integral-category} hold.
\end{proof}

Our second result exhibits a collection of quasi-abelian categories that are not integral. This is due to the Hausdorff property that we require below. Although all categories in Theorem \ref{THM-1} are full subcategories of $\TVS$, their cokernels and thus pushouts compute algebraically differently than in $\rMod{k}$. Theorem \ref{THM-1} extends Rump's results \cite[\S 2.2]{Rump01-almost-abelian-categories} on Hausdorff topological abelian groups.

\begin{thm}\label{THM-1} Let $k\in\{\mathbb{R},\mathbb{C}\}$ be fixed. 
The categories\vspace{-4pt}
\begin{compactitem}
\item[\emph{(i)}] $\BAN$ of Banach spaces;\vspace{3pt}
\item[\emph{(ii)}] $\NOR$ of normed spaces;\vspace{3pt}
\item[\emph{(iii)}] $\FRE$ of Fr{\'e}chet spaces;\vspace{3pt}
\item[\emph{(iv)}] $\HDLCS$ of Hausdorff locally convex spaces;\vspace{3pt}
\item[\emph{(v)}] $\HDTVS$ of Hausdorff topological vector spaces;\vspace{3pt}
\item[\emph{(vi)}] $\NUC$ nuclear spaces;\vspace{3pt}
\item[\emph{(vii)}] $\NUCFRE$ of nuclear Fr{\'e}chet spaces;\vspace{3pt}
\item[\emph{(viii)}] $\FS$ of Fr{\'e}chet-Schwartz spaces; and\vspace{3pt}
\item[\emph{(ix)}] $\FH$ of Fr{\'e}chet-Hilbert spaces\vspace{-4pt}
\end{compactitem}
over $k$, each furnished with linear and continuous maps as morphisms, are quasi-abelian but not (left or right) integral.
\end{thm}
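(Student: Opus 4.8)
The plan is to dispose of quasi-abelianness quickly, by citation plus one short closure argument, and to concentrate the effort on the failure of integrality, which I would obtain uniformly from a single ``pushout kills a vector'' mechanism, fed by just two concrete sequence-space examples. For quasi-abelianness: in $\BAN$, $\NOR$, $\FRE$, $\HDLCS$ and $\HDTVS$ the kernel of $f\colon X\to Y$ is $f^{-1}(0)\hookrightarrow X$ with the induced topology, while --- in contrast to the situation of Theorem~\ref{THM-0} --- the cokernel is the Hausdorff quotient $Y\to Y/\overline{\operatorname{ran}f}$, by the \emph{closure} of the range, and with these descriptions quasi-abelianness is classical (see Schneiders \cite{Schneiders}, Prosmans \cite{Prosmans00}, Frerick--Sieg \cite{FrerickSieg}). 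The categories $\NUCFRE$, $\FS$ and $\FH$ are full additive subcategories of $\FRE$, and $\NUC$ is a full additive subcategory of $\HDLCS$; all four are closed under passing to closed subspaces and to Hausdorff quotients by closed subspaces, so they inherit quasi-abelianness, since (a routine check) a full additive subcategory of a quasi-abelian category that is closed under the formation of kernels and cokernels is again quasi-abelian.

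For non-integrality, the first step is a reduction: each of the nine categories is quasi-abelian, hence semi-abelian by Proposition~\ref{prop:quasi-abelian-or-integral-implies-semi-abelian}(i), so by Proposition~\ref{prop:semi-abelian-implies-left-right-equivalence}(ii) left and right integrality coincide, and it suffices to show that monomorphisms fail to be stable under pushout, contradicting Definition~\ref{def:integral-category}. The second step sets up the mechanism: in a pre-abelian category the pushout of $i\colon X\to Y$ along $f\colon X\to Z$ is $\operatorname{coker}\bigl(X\xrightarrow{(i,-f)}Y\oplus Z\bigr)$ with the evident structure maps, and the pushout of $i$ is the induced map $Z\to P$; in each of our categories this cokernel is $Y\oplus Z$ modulo the \emph{closure} of $\{(i(x),-f(x))\mid x\in X\}$, and monomorphisms are exactly the injections. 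Hence, as soon as I can exhibit $i$ monic, a morphism $f\colon X\to Z$ and a sequence $(x_{n})$ in $X$ with $i(x_{n})\to 0$ in $Y$ but $f(x_{n})\to z_{0}\neq 0$ in $Z$, the pair $(0,-z_{0})$ lies in that closure, so $Z\to P$ kills the nonzero element $-z_{0}$ and is not a monomorphism.

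The third step supplies the data. For (iii)--(ix) I would take $X=s$ (rapidly decreasing sequences), $Y=\omega=k^{\mathbb{N}}$, $Z=k$, with $i$ the dense non-closed inclusion and $f\bigl((a_{j})_{j}\bigr)=\sum_{j}a_{j}$, which is continuous on $s$ because $\sum_{j}|a_{j}|\leq\bigl(\sum_{j}j^{-2}\bigr)\sup_{j}j^{2}|a_{j}|$; with $x_{n}=e_{n}$ the $n$-th unit vector, $i(e_{n})\to 0$ coordinatewise in $\omega$ while $f(e_{n})=1$ for all $n$. Since $s$ and $\omega$ are nuclear Fr\'echet spaces --- and Fr\'echet--Hilbert, via the equivalent Hilbertian fundamental system $\bigl(\sum_{j}j^{2k}|a_{j}|^{2}\bigr)^{1/2}$ --- this single example settles $\FRE$, $\HDLCS$, $\HDTVS$, $\NUC$, $\NUCFRE$, $\FS$ and $\FH$ at once. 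For (i) and (ii), where every space in sight must be normable, I would instead take $X=\ell^{1}$, $Y=\ell^{2}$, $Z=k$, with $i$ the inclusion and the same functional $f\bigl((a_{j})_{j}\bigr)=\sum_{j}a_{j}$ (continuous on $\ell^{1}$), and $x_{n}=\tfrac{1}{n}(e_{1}+\dots+e_{n})$, so that $\|x_{n}\|_{\ell^{2}}=n^{-1/2}\to 0$ but $f(x_{n})=1$.

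The verifications involved --- continuity of the two functionals, the stated limits, and membership of $s,\omega,\ell^{1},\ell^{2},k$ in the relevant categories --- are routine, so I expect the only genuine friction to be twofold. First, one must take care that the cokernel, and hence the pushout, is the quotient by the \emph{closure} of the pertinent subspace; this is exactly what makes $(0,-z_{0})$ land in the kernel of $Z\to P$, and it also ensures that $P$ is a bona fide object of the category. Second, on the quasi-abelian side, one has to confirm that $\NUC$, $\NUCFRE$, $\FS$ and $\FH$ are genuinely stable under Hausdorff quotients by closed subspaces, which is where the standard structure theory of nuclear, Schwartz and Fr\'echet--Hilbert spaces is needed.
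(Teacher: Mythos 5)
Your proposal is correct and follows essentially the same route as the paper: quasi-abelianness by citation plus the observation that the subcategories reflect kernels and cokernels of the ambient category, reduction of non-integrality to right (non-)integrality via Propositions \ref{prop:quasi-abelian-or-integral-implies-semi-abelian} and \ref{prop:semi-abelian-implies-left-right-equivalence}, and then the identical ``pushout is the cokernel of $(i,-f)$, which quotients by a \emph{closure} that swallows a nonzero vector'' mechanism, instantiated with $\ell^1$ sitting densely in a Banach sequence space for the normable cases and $s\hookrightarrow k^{\mathbb{N}}$ with the summation functional for the nuclear/Fr\'echet cases. The only differences (using $\ell^2$ in place of $\czero$, unit vectors in place of the averaged sequences, and assigning the Fr\'echet example to cases (iii)--(v) as well) are cosmetic and do not affect correctness.
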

\begin{proof} Again, it is well-known that these categories are quasi-abelian; see e.g.\ Prosmans \cite[Prop.\ 3.1.7]{Prosmans95}, \cite[Prop.\ 3.2.17]{Schneiders}, \cite[Prop.\ 4.4.5]{Prosmans00} and \cite[Prop.\ 3.1.8]{Prosmans00} for direct proofs for the first four. 
The most efficient approach, however, is to establish explicitly that $\HDTVS$ is quasi-abelian, which can be achieved with a slight modification of the proofs just cited. 
In doing so, one observes that given a morphism $f\colon X\rightarrow Y$ in $\HDTVS$, its kernel is the inclusion $f^{-1}(0)\rightarrow X$, and its cokernel is the quotient $Y\rightarrow Y/\overline{\operatorname{ran}f}$. 
These spaces are endowed with the subspace and the quotient topology, respectively. 
Since the defining properties for the other categories, like Banach, normed, Fr{\'e}chet, etc., are inherited by closed subspaces and quotients by closed subspaces\footnote{For the not-so-explicitly-studied categories in (vi)--(ix), this can be found in \cite[Prop.\ 28.6, Prop.\ 24.18 and Rmk.\ 29.15]{MeiseVogt}.}, these categories reflect the kernels and cokernels of $\HDTVS$. From this it follows that all these categories are also quasi-abelian by, for example, \cite[Prop.\ 4.20]{FrerickSieg}.

By Propositions \ref{prop:quasi-abelian-or-integral-implies-semi-abelian} and \ref{prop:semi-abelian-implies-left-right-equivalence}, left integrality is equivalent to right integrality for all categories in our list; thus, below we show that they all are not right integral. 
For this we bear in mind that in all nine categories, a morphism is monic if and only if it is injective. This follows from our observations above about kernels in these categories.

(i)--(v):\; Consider the Banach spaces 
\[
\czero{}=\displaystyle\bigl\{x=(x_{j})_{j\in\mathbb{N}}\in k^{\mathbb{N}}\:\big|\,\lim_{j\rightarrow\infty}x_j=0\bigr\} \;\;\text{ and } \;\; \ell^1=\displaystyle\bigl\{x\in k^{\mathbb{N}}\:\big|\:\|x\|_1=\Bigsum{j=1}{\infty}|x_j|<\infty\bigr\}
\]
of null sequences and of absolutely summable sequences, respectively. Here, $\czero$ is endowed with the supremum norm given by $\|x\|_{\infty}=\sup_{j\in\mathbb{N}}|x_j|$ and $\ell^1$ is endowed with the 1-norm $\|\cdot\|_1$ indicated above. The field $k$ is endowed with the absolute value as a norm. We denote by $i\colon\ell^1\rightarrow\czero$ the inclusion and by $\Sigma\colon\ell^1\rightarrow k$ the map that sends a sequence to its sum. Now we put
$
P=(\czero\oplus\,k)/\,\textstyle\overline{\operatorname{ran}\scriptstyle\columnvec{\phantom{\shortminus}i}{\shortminus\Sigma}}
$, 
where $\czero\oplus\,k$ carries the product topology, the closure is taken in $\czero\oplus\,k$, and $P$ is furnished with the quotient topology. We denote by $p\colon\czero\oplus\,k\rightarrow P$ the quotient map, and by $i_1\colon \czero\rightarrow \czero\oplus\,k$ and $i_2\colon k\rightarrow \czero\oplus\,k$ the inclusion maps. We claim that in all five categories the diagram
\begin{equation*}
\begin{tikzcd}
\ell^1\arrow[d, swap, "\Sigma"]\arrow[r, "i"]& \czero \arrow[d, "p\hspace{0.75pt}\circ\hspace{0.75pt}i_1"]\\
k\arrow[r, swap, "p\hspace{0.75pt}\circ\hspace{0.75pt}i_2"]& P
\end{tikzcd}
\end{equation*}
is a pushout square, and that $i$ is a monomorphism but $p\circ i_2$ is not.

Since the pushout of $i$ along $\Sigma$ is the cokernel of {\small$\columnvec{\phantom{\shortminus}i}{\shortminus\Sigma}$}$\colon\ell^1\rightarrow\czero\oplus\,k$, our initial remarks establish the first claim and imply that for the second claim it is enough to show that $p\circ i_2$ is not injective. In order to achieve this we will establish that $(p\circ i_2)(1) = 0$ in $P$. 
Applying the definition of $p\circ i_2$, we see that we need to show that
\[
{\textstyle\columnvec{0}{1}}\in\overline{\big\{{\textstyle\columnvec{\phantom{\shortminus}i}{\shortminus\Sigma}}(x)\:\big|\:x\in\ell^1\bigr\}}
\] 
holds. For this we define a sequence $(x^n)_{n\in\mathbb{N}}$ in $\ell^1$ as follows. For positive integers $n$ and $j$ we put $x^n_j=-1/n$ whenever $1\leqslant j\leqslant n$, and $x^n_j=0$ otherwise. Since for each $n$ only finitely many entries of
\[
x^n=(-1/n,-1/n,\dots,-1/n,0\dots)
\]
are non-zero, we get $(x^n)_{n\in\mathbb{N}}\subseteq\ell^1$. In view of $\|x^n\|_{\infty}=1/n$ and $i(x^n)=x^n$ we see that $(i(x^n))_{n\in\mathbb{N}}$ converges to $0$ in $\czero$. 
On the other hand, we have
\[
\bigl|1-(-\Sigma(x^n))\bigr|=\bigl|1+\Bigsum{j=1}{n}{-1/n}\bigr| = 0
\]
for every $n$. Whence, $(-\Sigma(x^n))_{n\in\mathbb{N}}$ converges to $1$ in $k$
and {\small$\columnvec{0}{1}$}$\in\overline{\operatorname{ran}\scriptstyle\columnvec{\phantom{\shortminus}i}{\shortminus\Sigma}}$, as desired.

(vi)--(ix):\; 
Since nuclear Fr{\'e}chet spaces are Fr{\'e}chet-Hilbert and Fr{\'e}chet-Schwartz by \cite[Lem.\ 28.1 and Cor.\ 28.5]{MeiseVogt}, we construct a pushout diagram like in the first part of the proof but with all spaces being nuclear Fr{\'e}chet. 
As a locally convex space is simultaneously Banach and nuclear if and only if it is of finite dimension, we need nuclear replacements for $\czero$ and $\ell^1$. 
First, consider the space $k^{\mathbb{N}}$ of all sequences, which carries the topology of point-wise convergence given by $|x|_m=\sup_{1\leqslant j\leqslant m}|x_j|$. Secondly, let 
\[
s=\bigl\{x\in k^{\mathbb{N}}\:\big|\:\forall\:m\in\mathbb{N}\colon \|x\|_m=\Bigsum{j=1}{\infty}j^{m}|x_j|<\infty\bigr\}
\]
be the space of rapidly decreasing sequences, which is endowed with the topology generated by the semi-norms $(\|\cdot\|_m)_{m\in\mathbb{N}}$. 
Both spaces are nuclear\footnote{This follows from \cite[Prop.\ 28.16]{MeiseVogt} since $s=\lambda^1((j^{m})_{j,m})$ and $k^{\mathbb{N}}=\lambda^{\infty}((\mathbb{1}_{\{1,\dots,m\}}(j))_{j,m})$ are both from the class of K{\"o}the echelon spaces.}. 
Since $|x|_m\leqslant\|x\|_m$ and $|\Sigma(x)|\leqslant\|x\|_m$ hold for every $m\in\mathbb{N}$ and every $x\in s$, the inclusion $i'\colon s\rightarrow k^{\mathbb{N}}$ and the summation $\Sigma'\colon s\rightarrow k$ are both well-defined and continuous. 
The space 
$P' = ( k^{\mathbb{N}} \oplus k )/
\,\overline{\operatorname{ran}\scriptstyle\columnvec{\phantom{\shortminus\shortminus}i'}{\shortminus\Sigma'}}$ and the quotient map  $p'\colon k^{\mathbb{N}} \oplus k\rightarrow P'$ are defined analogously to the first part.

Our observations at the beginning of this proof imply that the following holds in all four categories. 
Firstly, the pushout of $i'$ along $\Sigma'$ is given by the cokernel of {\small$\columnvec{\phantom{\shortminus\shortminus}i'}{\shortminus\Sigma'}$}, and thus precisely by $P'$. Secondly, $i'$ is monic.
Therefore the diagram 
\[
\begin{tikzcd}
s\arrow[d, swap, "\Sigma'"]\arrow[r, "i'"]& k^{\mathbb{N}} \arrow[d, "p'\hspace{0.75pt}\circ\hspace{0.75pt}i_{1}'"]\\
k\arrow[r, swap, "p'\hspace{0.75pt}\circ\hspace{0.75pt}i_{2}'"]& P'
\end{tikzcd}
\]
is a pushout. By employing the same sequence $(x^n)_{n\in\mathbb{N}}$ as in the first part, we can see that $p'\circ i_{2}'$ is not monic.
\end{proof}

We now consider two examples of categories that are neither semi-abelian nor integral. 
Both are full subcategories of $\HDLCS$; the first reflects the kernels and the second the cokernels of $\HDLCS$. 
However, in the first one cokernels compute differently than in $\HDLCS$, and in the second the kernels do.

\begin{thm}
\label{THM-COM}
Let $k\in\{\mathbb{R},\mathbb{C}\}$ be fixed. The category $\COM$ of complete Hausdorff locally convex spaces over $k$, furnished with linear and continuous maps as morphisms, is right quasi-abelian but neither left semi-abelian nor right integral.
\end{thm}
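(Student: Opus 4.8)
The plan is to first pin down how kernels, cokernels and pushouts are computed in $\COM$, and then to settle the three assertions separately. For a morphism $f\colon X\to Y$ in $\COM$ the kernel is still the inclusion $f^{-1}(0)\to X$ with the induced topology --- a closed subspace of a complete space is complete, and the universal property descends from $\HDLCS$ --- so, exactly as in Theorem~\ref{THM-1}, a morphism in $\COM$ is monic if and only if it is injective, and in fact the kernels in $\COM$ are precisely the closed topological embeddings between objects of $\COM$ (if $j\colon C\to D$ is such an embedding then $j=\ker(D\to\widehat{D/j(C)})$). By contrast, the cokernel of $f$ in $\COM$ is the $\HDLCS$-cokernel followed by completion, namely $Y\to\widehat{Y/\overline{\operatorname{ran}f}}$; indeed the completion functor $\widehat{(\cdot)}\colon\HDLCS\to\COM$ is left adjoint to the inclusion, so $\COM$ is a full reflective subcategory of $\HDLCS$. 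Two consequences will be used below: a morphism in $\COM$ is epic if and only if it has dense range, and every pushout in $\COM$ is obtained by applying $\widehat{(\cdot)}$ to the corresponding pushout in $\HDLCS$ (post-composing the coprojections with the completion map).

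For right quasi-abelianity I must show that kernels are stable under pushout. Let $a\colon A\to B$ be a kernel in $\COM$, that is, a closed topological embedding, and let $b\colon A\to C$ be arbitrary. Passing to $\HDLCS$, which is quasi-abelian by Theorem~\ref{THM-1}, the morphism $a$ is a kernel there as well, so its pushout $a''\colon C\to\overline D$ along $b$ computed in $\HDLCS$ is again a closed topological embedding. The pushout of $a$ along $b$ in $\COM$ is then $a'\colon C\to D:=\widehat{\overline D}$ with $a'=\eta\circ a''$, where $\eta\colon\overline D\to\widehat{\overline D}$ is the completion embedding. As a composite of topological embeddings, $a'$ is a topological embedding; and $a'(C)\cong C$ is complete, hence closed in the Hausdorff space $D$. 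Thus $a'$ is a closed topological embedding, i.e.\ a kernel in $\COM$, so $\COM$ is right quasi-abelian. The place where passing from $\HDLCS$ to $\COM$ is decisive is exactly ``a complete subspace is closed''.

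For the failure of left semi-abelianity, suppose $f\colon X\to Y$ in $\COM$ factors as $f=i\circ p$ with $i$ monic and $p$ a cokernel. Since $i$ is injective, $p^{-1}(0)=f^{-1}(0)$; and, by the description above, a cokernel in $\COM$ is a quotient map onto $X/N$ followed by the completion $X/N\hookrightarrow\widehat{X/N}$. Hence $p$ is forced to be $X\twoheadrightarrow X/f^{-1}(0)\hookrightarrow\widehat{X/f^{-1}(0)}$ and, as $p$ has dense range, $i$ is uniquely determined by $i\circ p=f$. So it suffices to exhibit an $f$ for which this canonical $i$ is not injective, which can happen only when $X/f^{-1}(0)$ fails to be complete. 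To this end I invoke the classical fact (due to Köthe and Grothendieck) that there exists a complete Hausdorff locally convex space $G$ with a closed subspace $H$ such that $V:=G/H$ is not complete. Pick $0\neq w\in\widehat V\setminus V$, set $L:=k\,w$ (a one-dimensional, hence closed, subspace of $\widehat V$ with $L\cap V=0$, complemented by the Hahn--Banach theorem so that $Y:=\widehat V/L$ is again complete), and let $f\colon G\to Y$ be the composite $G\twoheadrightarrow V\hookrightarrow\widehat V\twoheadrightarrow\widehat V/L=Y$. Then $f^{-1}(0)=H$ because the last three maps are injective on $V$, so $X/f^{-1}(0)=V$ and $p$ is the map $G\twoheadrightarrow V\hookrightarrow\widehat V$; but the forced $i\colon\widehat V\to\widehat V/L$ is the quotient map, whose kernel is $L\neq 0$. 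Hence no factorization of $f$ as (monomorphism)$\,\circ\,$(cokernel) exists, so $\COM$ is not left semi-abelian (and therefore, by Proposition~\ref{prop:quasi-abelian-or-integral-implies-semi-abelian}(ii), not left integral either). The only genuine input here is the incomplete-quotient example; the rest is formal.

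Finally, for the failure of right integrality I must exhibit a monomorphism in $\COM$ whose pushout is not monic. Let $c\subseteq\ell^\infty$ be the space of convergent scalar sequences and equip $\ell^1$ with the two Hausdorff topologies $\sigma(\ell^1,\czero)$ and $\sigma(\ell^1,c)$, both coarser than the norm topology (here $\czero\subseteq c$ are total over $\ell^1$). Put $A:=(\ell^1,\|\cdot\|_1)$, $B:=\widehat{(\ell^1,\sigma(\ell^1,\czero))}$ and $C:=\widehat{(\ell^1,\sigma(\ell^1,c))}$, all in $\COM$, and let $a\colon A\to B$ and $b\colon A\to C$ be the natural (injective, hence monic) maps. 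Using the description of pushouts in $\COM$, the pushout $a'\colon C\to D$ of $a$ along $b$ has $D=\widehat{(B\oplus C)/\overline N}$ with $N=\{(ax,-bx):x\in A\}$ and $a'(z)=[(0,z)]$ followed by the completion map, so its kernel is $\{z\in C:(0,z)\in\overline N\}$. Now the unit vectors $e_n\in\ell^1$ satisfy $a(e_n)\to 0$ in $B$ (since $\langle e_n,y\rangle=y_n\to 0$ for $y\in\czero$), whereas $(e_n)$ is $\sigma(\ell^1,c)$-Cauchy (its coordinates converge) but does not converge to $0$ there (pair against $(1,1,1,\dots)\in c$); hence $b(e_n)$ converges in $C$ to some $c^{\ast}\neq 0$. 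Consequently $(a(e_n),-b(e_n))\to(0,-c^{\ast})$ exhibits $(0,-c^{\ast})\in\overline N$, so $0\neq -c^{\ast}$ lies in the kernel of $a'$ and $a'$ is not monic. Thus $\COM$ is not right integral. I expect this explicit construction --- together with locating the classical incomplete-quotient space needed in the previous step --- to be the main work, everything else being either formal or a direct consequence of the results recalled in Sections~\ref{SEC:PRE-ABELIAN-CATEGORIES}--\ref{SEC:TVS}.
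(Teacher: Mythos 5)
Your proof is correct, and it coincides with the paper's argument on the positive part: for right quasi-abelianity both proofs identify kernels in $\COM$ with closed topological embeddings (equivalently, injective morphisms that are open onto their range), compute the pushout in $\COM$ as the completion of the pushout in $\HDLCS$, and conclude via ``a complete subspace of a Hausdorff space is closed''. Where you diverge is in the two negative statements. For the failure of left semi-abelianity the paper simply cites \cite[Exam.\ 4.2]{KopylovWegner12}, whereas you rebuild such a counterexample from the classical K\"othe--Grothendieck fact that a separated quotient $G/H$ of a complete locally convex space need not be complete; your reduction is sound (a cokernel is the cokernel of its kernel, so $p$ is forced to be $G\to\widehat{G/H}$ and $i$ is then determined by density and the Hausdorff property), and the auxiliary point that $\widehat{V}/L$ remains complete because a one-dimensional subspace is complemented by Hahn--Banach is correct. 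For the failure of right integrality the paper's route is markedly lighter: it re-uses the pushout of $i\colon\ell^1\to\czero$ along $\Sigma\colon\ell^1\to k$ from the proof of Theorem \ref{THM-1} and merely observes that $P$ is already a Banach space, so the very same square is a pushout in $\COM$. Your weak-topology construction with $B=\widehat{(\ell^1,\sigma(\ell^1,\czero))}$ and $C=\widehat{(\ell^1,\sigma(\ell^1,c))}$ also works --- the computation of the closure of $N$ via the unit vectors $e_n$ is correct --- but it is heavier than necessary; the whole point of recycling the Banach example is that completeness comes for free there. In short, your version buys self-containedness (no external citation for the semi-abelian failure) at the cost of invoking a nontrivial classical example and a more delicate completion argument, while the paper's version is shorter by leaning on Theorem \ref{THM-1} and the literature.
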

\begin{proof}
The category $\COM$ reflects kernels of $\HDLCS$, and thus a morphism $f\colon X\rightarrow Y$ in $\COM$ is monic if and only if it is injective. To compute the cokernel
\[
Y\rightarrow \widehat{Y/\overline{\operatorname{ran}f}}
\]
of $f$ in $\COM$, one must take a completion. 
From this it was derived in \cite[Exam.\ 4.2]{KopylovWegner12} that $\COM$ is not left semi-abelian. 
If, on the other hand, we go through the first example in the proof of Theorem \ref{THM-1}, but in the category $\COM$, we see that $P$ is already complete since we are dealing with Banach spaces. The diagram constructed in the proof of Theorem \ref{THM-1} is thus also a pushout in $\COM$, and hence $\COM$ is not right integral.

It remains to see that $\COM$ is right quasi-abelian. We remark that by \cite[Prop.\ 4.1.10 and Cor.\ 2.1.9]{Prosmans00}, a morphism in $\COM$ is a kernel if and only if it is injective and open onto its range. Notice that this implies automatically that $\operatorname{ran}f$ is closed; see \cite[Rmk.\ 4.1.11(i)]{Prosmans00}. Let $f\colon X\rightarrow Y$ be a kernel and $g\colon X\rightarrow Z$ be an arbitrary morphism. We put now 
$
Q= (Y\hspace{-1.5pt}\oplus\hspace{-1pt} Z) / \textstyle\overline{\operatorname{ran}\scriptstyle\columnvec{\phantom{\shortminus}g}{\shortminus{}f}}
$.
Notice that the pushout of $f$ along $g$ taken in $\COM$ factors through the pushout taken in $\HDLCS$. 
Thus, there is a diagram 
\[
\begin{tikzcd}
X\arrow[d, swap, "g"]\arrow[r, "f"]& Y \arrow[d]\\
Z\arrow[r, swap, "q\hspace{0.75pt}\circ\hspace{0.75pt}i_2"]\ar[equal]{d}& Q\arrow[d, "i"]\\
Z\arrow[r, swap, "i\hspace{0.75pt}\circ\hspace{0.75pt}q\circ\hspace{0.75pt}i_2"]& \widehat{Q}
\end{tikzcd}
\]
in which the outer rectangle is the pushout in $\COM$ and the upper square is the pushout in $\HDLCS$.
Here, $i_2\colon Z\rightarrow Y\oplus Z$ is the inclusion, $q\colon Y\oplus Z\rightarrow Q$ is the quotient map and $i\colon Q\rightarrow\widehat{Q}$ is the inclusion of $Q$ into its completion. 
Since $\HDLCS$ is quasi-abelian, $q\circ i_2$ is a kernel in $\HDLCS$, and thus injective and open onto its range; see \cite[Cor.\ 3.1.5]{Prosmans00}. 
Since $i$ is an isomorphism onto its range, we see that $i\circ q\circ i_2$ is injective and open onto its range, too. 
Thus, it is a kernel in $\COM$, and we are done.
\end{proof}

\begin{thm}\label{THM-LB} Let $k\in\{\mathbb{R},\mathbb{C}\}$ be fixed. The category $\LB$ of countable Hausdorff locally convex inductive limits of Banach spaces over $k$, furnished with linear and continuous maps as morphisms, is left quasi-abelian but neither right semi-abelian nor left integral.
\end{thm}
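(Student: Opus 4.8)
The plan is to prove the dual of Theorem~\ref{THM-COM}: whereas $\COM$ has well-behaved kernels and pathological cokernels, $\LB$ has well-behaved cokernels and pathological kernels. First I would record the exactness data of $\LB$ (whose pre-abelian structure --- in particular, existence of kernels and hence pullbacks --- I take for granted). Since a quotient of a countable locally convex inductive limit of Banach spaces by a closed subspace is again of this form --- for $X=\varinjlim_nX_n$ and $Z\subseteq X$ closed one has $X/Z=\varinjlim_nX_n/(X_n\cap Z)$ --- the category $\LB$ reflects the cokernels of $\HDLCS$: the cokernel of $f\colon X\to Y$ in $\LB$ is $Y\to Y/\overline{\operatorname{ran}f}$, so a morphism of $\LB$ is epic if and only if it has dense range. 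Kernels are more delicate, since a closed subspace of an $\LB$-object need not lie in $\LB$; but testing against the one-dimensional space shows that $\ker_{\LB}f$ has underlying vector space $f^{-1}(0)$, carrying a possibly strictly finer locally convex topology, so that a morphism of $\LB$ is monic if and only if it is injective. I would also use throughout that every $\LB$-object is ultrabornological and webbed, hence, by De~Wilde's open mapping theorem, that every continuous linear bijection between $\LB$-objects is an isomorphism.

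To prove $\LB$ left quasi-abelian, I would take a pullback square in $\LB$ whose bottom arrow $d\colon C\to D$ is a cokernel and compare it with the pullback $\Pi=\{(x,y)\in B\oplus C\mid cx=dy\}$ formed in $\HDLCS$. As $\HDLCS$ is quasi-abelian, the projection $\pi_B\colon\Pi\to B$ is a cokernel there, i.e.\ an open surjection, with $\pi_B^{-1}(0)=\{0\}\times\ker d$. The $\LB$-pullback $P$ has underlying vector space $\Pi$ --- by the one-dimensional test the comparison morphism $P\to\Pi$ is bijective --- and the pulled-back arrow is $a=\pi_B\colon P\to B$; this $a$ is surjective, and $a^{-1}(0)=\{0\}\times\ker d$ is closed in $P$. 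Hence $a$ factors as $P\to P/a^{-1}(0)\xrightarrow{\ \bar a\ }B$ with $P/a^{-1}(0)$ an $\LB$-object (a quotient by a closed subspace) and $\bar a$ a continuous linear bijection of $\LB$-objects, so $\bar a$ is an isomorphism and $a$ is a cokernel in $\LB$. Thus cokernels are stable under pullback, $\LB$ is left quasi-abelian, and by Proposition~\ref{prop:quasi-abelian-or-integral-implies-semi-abelian} it is left semi-abelian.

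For the failure of left integrality I would produce an explicit pullback square. Put $\varphi=k^{(\mathbb{N})}=\varinjlim_nk^n\in\LB$, the space of finite sequences with its finest locally convex topology, and let $\jmath\colon\varphi\to\czero$ be the inclusion; it has dense range, hence is epic in $\LB$. Fix $v=(1/(n+1))_n\in\czero$ and $\lambda\colon k\to\czero,\ t\mapsto tv$. Computed in $\HDLCS$ --- hence in $\LB$ --- the pullback of $\jmath$ along $\lambda$ is $0$, since the fibre product $\{(\xi,t)\in\varphi\oplus k\mid\xi=tv\}$ is trivial ($tv$ being a finite sequence only for $t=0$). The resulting pullback square has $\jmath$ epic but its pullback $0\to k$ along $\lambda$ non-epic; so epimorphisms are not stable under pullback and $\LB$ is not left integral.

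For the failure of right semi-abelianness I would analyse the canonical morphism $\coim_{\LB}f\to\im_{\LB}f$, which by the first paragraph equals $X/f^{-1}(0)\to\ker_{\LB}(Y\to Y/\overline{\operatorname{ran}f})$ and has image $\operatorname{ran}f$; it is epic precisely when $\operatorname{ran}f$ is dense in the topology of $\ker_{\LB}(Y\to Y/\overline{\operatorname{ran}f})$, which on the underlying space $\overline{\operatorname{ran}f}$ can be strictly finer than the one induced from $Y$. It therefore suffices to exhibit an $\LB$-object $Y=\varinjlim_nY_n$ with a closed subspace $Z$ on which the inductive-limit topology $\varinjlim_n(Y_n\cap Z)$ strictly refines the subspace topology, together with a morphism $f$ whose range is dense in $Z$ for the subspace topology but not for that finer topology --- for $Y$ regular one may take $f$ surjecting onto a suitable step $Y_m\cap Z$. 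The main obstacle is precisely this last point: exhibiting, or locating in the classical theory of non-regular (LB)-spaces, such a badly embedded closed subspace. Once it is available, everything else is a routine dualisation of the proof of Theorem~\ref{THM-COM}.
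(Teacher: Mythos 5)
Your treatment of the exactness data, of left quasi-abelianness and of the failure of left integrality is sound. The quasi-abelian argument is essentially the paper's --- surjective morphisms of $\LB$ coincide with cokernels via the open mapping theorem for webbed/ultrabornological spaces, and the algebraic pullback of a surjection along anything is a surjection --- with the appeal to De~Wilde made explicit where the paper leaves it implicit. For non-left-integrality you take a genuinely different and arguably cleaner route: the paper recycles the pullback square witnessing that $\BAN$ is not left integral (Theorem~\ref{THM-1}), noting that kernels of Banach spaces acquire no new topology so that a $\BAN$-pullback is already an $\LB$-pullback, whereas your square with $\jmath\colon\varphi\to\czero$ and $\lambda(t)=tv$ has zero fibre product outright, since $tv$ is a finite sequence only for $t=0$. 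Both are correct.

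The genuine gap is exactly where you flag it: the failure of right semi-abelianness. Your reduction is right --- one must produce an $\LB$-object $Y$ with a closed subspace $Z$ whose kernel topology $Z^{\flat}$ is strictly finer than the induced one, together with a morphism whose range is dense in $Z$ for the induced topology but not for the $\flat$-topology --- but you neither construct such data nor locate it in the literature, and this is precisely the nontrivial functional-analytic content of the theorem. It cannot fall out of the soft structure theory you set up (if every closed subspace of an $\LB$-object carried the induced topology, $\LB$ would reflect kernels of $\HDLCS$ and the statement would change character), and the parenthetical suggestion that for regular $Y$ one may take $f$ surjecting onto a step $Y_m\cap Z$ does not visibly yield the required failure of density. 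The paper settles this point by citing the proof of Proposition~14 of Wegner's \emph{The heart of the Banach spaces}, where an explicit morphism of $\LB$ is built whose parallel morphism $\Coim f\to\Im f$ is not epic; until such an example is supplied, the claim that $\LB$ is not right semi-abelian (and hence the ensuing conclusion that it is neither right quasi-abelian nor right integral) remains unproved.
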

\begin{proof}
The category $\LB$ reflects cokernels of $\HDLCS$, and thus a morphism $f\colon X\rightarrow Y$ is epic in $\LB$ if and only if it has dense range. 
Forming the kernel
\[
f^{-1}(0)^{\flat}\rightarrow X
\]
of $f$ requires endowing $f^{-1}(0)$ with a possibly strictly finer topology; see Wegner \cite[Proof of Prop.\ 14]{Wegner}). 
The proof in \cite{Wegner} shows that $\LB$ is left semi-abelian but not semi-abelian---and therefore necessarily not right semi-abelian. 

Furthermore, $\LB$ is left quasi-abelian, as noted without proof already in \cite[p.\ 540]{KopylovWegner12}. Indeed, first observe that since $\LB$ reflects cokernels of $\HDLCS$, and since every cokernel is the cokernel of its own kernel, all cokernels of $\LB$ are surjective. 
Conversely, if $f\colon X\rightarrow Y$ is a surjective morphism in $\LB$, then it satisfies the universal property of a cokernel. 
Assume now that $f$ is a cokernel and let $g\colon Z\rightarrow Y$ be an arbitrary morphism in $\LB$. Then the pullback of $f$ along $g$ is 
\begin{equation*}
\begin{tikzcd}[column sep=21pt,row sep=21pt]
[g\;\shortminus\hspace{-4pt}f]^{-1}(0)^{\flat}\arrow[d, swap, "i_2"]\arrow[r, "i_1"]\commutes[\mathrm{PB}]{dr}& Z \arrow[d, "g"]\\
X\arrow[r, swap, "f"]& Y
\end{tikzcd}
\end{equation*}
which is algebraically the pullback taken in $\rMod{k}$. Thus, we see that $i_1$ is surjective and hence a cokernel in $\LB$ by the argument just above.

Finally, we use that $\BAN$ is a subcategory of $\LB$, in order to show that the latter is not left integral. Since $\BAN$ is not left integral by Theorem \ref{THM-1}, we can find a pullback diagram in $\BAN$ such that the bottom morphism is epic but the top one is not. Since for a Banach space $X$ and a closed subspace $U\subseteq X$, the topology of $U^{\flat}$ coincides with the topology induced by $X$, this diagram is also a pullback in $\LB$. 
From this we see that $\LB$ is not left integral.
\end{proof}

In view of Proposition \ref{prop:quasi-abelian-or-integral-implies-semi-abelian}, we note that it follows from Theorem \ref{THM-COM} that $\COM$ cannot be left quasi-abelian or left integral. Similarly, using Theorem \ref{THM-LB}, $\LB$ cannot be right quasi-abelian or right integral.

So far we have witnessed that there exist examples of quasi-abelian categories that are not integral. 
Next we give an example of an integral category that is not quasi-abelian. 
This establishes that the class of integral categories is not contained in the class of quasi-abelian ones. 
To the knowledge of the authors this seemed to be previously unknown. 
Notice that the cokernels appearing below have no closure in the denominator, since we deal here again with a category whose objects are in general not Hausdorff.

\begin{thm}
\label{THM-BOR}
Let $k\in\{\mathbb{R},\mathbb{C}\}$ be fixed. The category $\BOR$ of bornological (Hausdorff and non-Hausdorff) locally convex spaces over $k$, furnished with linear and continuous maps as morphisms, is integral but neither left nor right quasi-abelian.
\end{thm}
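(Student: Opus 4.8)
The plan is to follow the proof of Theorem~\ref{THM-0} for the integrality part, and to reduce the failure of quasi-abelianness to the classical refutation of Ra\u{\i}kov's conjecture. First I would pin down the relevant (co)limits in $\BOR$. Two standard facts are used: a quotient of a bornological locally convex space by an arbitrary subspace is again bornological, and a linear map out of a bornological space is continuous as soon as it is bounded. Hence, for $f\colon X\to Y$ in $\BOR$, the cokernel is the quotient $Y\to Y/\operatorname{ran}f$ with the quotient topology (no closure is taken, since non-Hausdorff objects are allowed), exactly as in $\LCS$; and the kernel is the inclusion $f^{-1}(0)^{\flat}\to X$, where $f^{-1}(0)^{\flat}$ denotes the bornologification of the subspace $f^{-1}(0)\subseteq X$. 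The universal property of this kernel holds because any $g\colon Z\to X$ with $Z\in\BOR$ and $fg=0$ factors set-theoretically through $f^{-1}(0)$, and the resulting linear map, being bounded, is continuous into $f^{-1}(0)^{\flat}$. In particular, a morphism of $\BOR$ is monic if and only if it is injective, and epic if and only if it is surjective.

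For integrality I would then argue exactly as for $\SNOR$, $\LCS$ and $\TVS$ in Theorem~\ref{THM-0}. A pushout in $\BOR$ is the cokernel of the map $A\to B\oplus C$, $z\mapsto(a(z),-b(z))$, and a pullback is the kernel of the map $B\oplus C\to D$, $(y,z)\mapsto c(y)-d(z)$; by the previous paragraph, on underlying vector spaces and on the structural maps these are computed precisely as in $\rMod{k}$, the topology (a quotient topology, respectively a bornologification) being algebraically irrelevant. Therefore, in any pullback square the projection $A\to B$ is surjective whenever $C\to D$ is, and in any pushout square the canonical map $C\to D$ is injective whenever $A\to B$ is. Since monic $=$ injective and epic $=$ surjective, both conditions of Definition~\ref{def:integral-category} hold, so $\BOR$ is left and right integral; in particular, by Proposition~\ref{prop:quasi-abelian-or-integral-implies-semi-abelian}, it is semi-abelian.

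Finally I would show that $\BOR$ is neither left nor right quasi-abelian. Since $\BOR$ is semi-abelian, Proposition~\ref{prop:semi-abelian-implies-left-right-equivalence} shows that left and right quasi-abelianness are equivalent for it, so it suffices to refute one of the two. This is precisely the content of the fact that $\BOR$ is a counterexample to Ra\u{\i}kov's conjecture; see Remark~\ref{REM-RAIKOV-BOR}. The mechanism is already visible above: although the $\BOR$-pullback coincides algebraically with the $\LCS$-pullback, it carries the bornologification of the latter's topology, and since a subspace (hence a pullback) of bornological spaces need not itself be bornological, this finer topology is in general genuinely different; one then produces a cokernel whose pullback along a suitable morphism is an epimorphism that fails to be a cokernel, so that cokernels are not stable under pullback in $\BOR$.

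The main obstacle is this last point. Identifying the (co)kernels and verifying integrality is of the same routine, essentially algebraic nature as Theorem~\ref{THM-0}. By contrast, refuting quasi-abelianness requires an explicit square in which the $\LCS$-pullback of two bornological spaces is not bornological, together with a check that passing to its bornologification destroys the cokernel property; this is where one must invoke (or reconstruct) the concrete example underlying Remark~\ref{REM-RAIKOV-BOR}.
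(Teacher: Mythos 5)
Your proposal is correct and follows essentially the same route as the paper: identify the cokernel as the plain quotient and the kernel as the bornologification of $f^{-1}(0)$, deduce that monic $=$ injective and epic $=$ surjective so that pushouts and pullbacks compute algebraically as in $\rMod{k}$ (giving integrality as in Theorem~\ref{THM-0}), and then refute left quasi-abelianness via the Bonet--Dierolf counterexample, extending to the right-hand version through Propositions~\ref{prop:quasi-abelian-or-integral-implies-semi-abelian} and~\ref{prop:semi-abelian-implies-left-right-equivalence}. The paper likewise does not reconstruct the counterexample but cites \cite{BonetDierolf} and \cite[Exam.\ 4.1]{SiegWegner}, so your flagged ``main obstacle'' is resolved there exactly as you anticipate.
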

\begin{proof} 
The category $\BOR$ reflects cokernels in $\LCS$. Analogously to $\LB$, the kernel of a morphism $f\colon X\rightarrow Y$ is the inclusion $f^{-1}(0)^{\scriptscriptstyle\text{BOR}}\rightarrow X$, where the `associated bornological topology' of $f^{-1}(0)^{\scriptscriptstyle\text{BOR}}$ can be strictly finer than the topology induced by $X$; see Sieg and Wegner \cite[Exam.\ 4.1]{SiegWegner}. 
We thus get that $f$ is monic if and only if $f$ is injective, and that $f$ is epic if and only if $f$ is surjective. Consequently, pushouts and pullbacks compute algebraically precisely as in $\rMod{k}$. 
Similarly to Theorem \ref{THM-0} we conclude that $\BOR$ is integral.

A counterexample constructed by Bonet and Dierolf in \cite{BonetDierolf} (see \cite[Exam.\ 4.1]{SiegWegner}) shows that $\BOR$ is not left quasi-abelian. 
However, by Proposition \ref{prop:semi-abelian-implies-left-right-equivalence}, $\BOR$ cannot be right quasi-abelian either as $\BOR$ is semi-abelian by Proposition \ref{prop:quasi-abelian-or-integral-implies-semi-abelian}. 
Note that it was known already that $\BOR$ is semi-abelian but not quasi-abelian, cf.\ Remark \ref{REM-RAIKOV-BOR}.
\end{proof}

\begin{cor}\label{cor:integral-not-contained-in-quasi-abelian}
The class of integral categories is not contained in the class of quasi-abelian categories.
\end{cor}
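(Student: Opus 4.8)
The plan is to observe that the corollary is an immediate consequence of Theorem~\ref{THM-BOR}. To establish that the class of integral categories is not contained in the class of quasi-abelian categories, it is enough to produce a single integral category that fails to be quasi-abelian, and Theorem~\ref{THM-BOR} supplies precisely such an example: the category $\BOR$ of bornological locally convex spaces over $k$ is integral but neither left nor right quasi-abelian. Hence $\BOR$ witnesses the asserted non-containment, and nothing further is required.

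Since all the real content sits in Theorem~\ref{THM-BOR}, I would recall the two points on which that theorem rests, so the reader sees why the example works. First, $\BOR$ reflects cokernels from $\LCS$, and although the kernel of a morphism $f\colon X\to Y$ may need to carry the (possibly strictly finer) associated bornological topology on $f^{-1}(0)$, one still has that a morphism is monic exactly when it is injective and epic exactly when it is surjective; consequently pushouts and pullbacks in $\BOR$ agree algebraically with those in $\rMod{k}$, and integrality follows just as in the proof of Theorem~\ref{THM-0}. Second, the Bonet--Dierolf counterexample shows that kernels are not stable under pushout in $\BOR$, so $\BOR$ is not left quasi-abelian; since $\BOR$ is semi-abelian by Proposition~\ref{prop:quasi-abelian-or-integral-implies-semi-abelian}, Proposition~\ref{prop:semi-abelian-implies-left-right-equivalence} then rules out right quasi-abelianness as well.

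The main obstacle is therefore not in the corollary itself but in Theorem~\ref{THM-BOR}, and specifically in having a concrete failure of the stability of kernels under pushout in $\BOR$ -- which is exactly what the Bonet--Dierolf example provides. Combined with Theorems~\ref{THM-1}, \ref{THM-COM} and~\ref{THM-LB}, which exhibit quasi-abelian categories that are not integral, the present corollary completes the picture: neither of the classes ``integral'' and ``quasi-abelian'' is contained in the other.
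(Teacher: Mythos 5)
Your proposal is correct and matches the paper exactly: the corollary is stated without proof precisely because it follows immediately from Theorem~\ref{THM-BOR}, with $\BOR$ serving as the witnessing integral, non-quasi-abelian category. Your recap of the two ingredients of Theorem~\ref{THM-BOR} (integrality via the algebraic computation of pushouts/pullbacks, and failure of left quasi-abelianness via Bonet--Dierolf plus Propositions~\ref{prop:quasi-abelian-or-integral-implies-semi-abelian} and~\ref{prop:semi-abelian-implies-left-right-equivalence}) is likewise faithful to the paper's argument.
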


We recall the connection between Ra\u{\i}kov's conjecture and the category $\BOR$.
\begin{rem}\label{REM-RAIKOV-BOR} 
Recall from \S\ref{SEC:INTRO} that \emph{Ra\u{\i}kov's conjecture} states that a category is semi-abelian if and only if it is quasi-abelian. It was posed around 1970 and answered negatively some 30 years later. Disproving it brought together aspects from algebra and analysis. 
The category $\BOR$ is one of the first two counterexamples given in the literature that falsify it. 
The other of these is due to Rump \cite[Exam.\ 1]{Rump08} and is a category of the form $A${\sf{-proj}}, where $A$ is a tilted algebra of Dynkin-type $\BE_{6}$. 
We refer to \cite{Rump11} for historical details on Ra{\u{\i}}kov's conjecture, and to \cite{Wengenroth12} for an extended survey on why the conjecture must naturally fail from the analytic point of view.
%
\end{rem}

We conclude this section on a related note. All the examples of semi-abelian categories we have studied so far (and even those in we will see in \S\ref{SEC:BVS}) are either integral or quasi-abelian. Therefore, it is natural to ask if there exists a semi-abelian category which is neither integral nor quasi-abelian. 
The authors would like to thank J.\ Wengenroth for sharing his idea behind the proof of the following result, which gives a positive answer to this question.

\begin{thm}
\label{thm:semi-category-that-is-not-integral-or-quasi-abelian}

There exist semi-abelian categories that are neither integral nor quasi-abelian. In particular, the product category $\BAN \times \BOR$ is an example of such a category.

\end{thm}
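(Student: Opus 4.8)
The strategy is to combine the two preceding results. By Theorem~\ref{THM-1} the category $\BAN$ is quasi-abelian---hence semi-abelian, by Proposition~\ref{prop:quasi-abelian-or-integral-implies-semi-abelian}---but not integral, while by Theorem~\ref{THM-BOR} the category $\BOR$ is integral---hence semi-abelian---but neither left nor right quasi-abelian. Since in a finite product category all the categorical notions involved here are computed componentwise, $\BAN\times\BOR$ should inherit semi-abelianness from both factors, the failure of integrality from $\BAN$, and the failure of quasi-abelianness from $\BOR$.

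First I would record the componentwise bookkeeping for additive categories $\CA_{1},\CA_{2}$: in $\CA_{1}\times\CA_{2}$ a morphism $(f_{1},f_{2})$ has a kernel (respectively, a cokernel) exactly when both $f_{i}$ do, and it is then $(\Ker f_{1},\Ker f_{2})$ (respectively, the pair of the cokernels); consequently $(f_{1},f_{2})$ is a monomorphism, an epimorphism, a kernel, or a cokernel precisely when both of its components are; a commutative square in $\CA_{1}\times\CA_{2}$ is a pullback (respectively, a pushout) exactly when each of its two component squares is; and the canonical coimage-to-image morphism of $(f_{1},f_{2})$ is the pair of the canonical coimage-to-image morphisms of $f_{1}$ and $f_{2}$. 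From the last point it is immediate that a product of (left, right, or two-sided) semi-abelian categories is again semi-abelian; in particular $\BAN\times\BOR$ is semi-abelian.

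To see that $\BAN\times\BOR$ is not integral it suffices, by Definition~\ref{def:integral-category}, to exhibit a pushout square in which the top morphism is monic but the bottom one is not. As $\BAN$ is not right integral, Theorem~\ref{THM-1} provides such a square in $\BAN$; placing alongside it, in the $\BOR$-coordinate, the trivial pushout square all of whose edges are an identity (this is a pushout, and an identity is monic) yields a pushout square in $\BAN\times\BOR$ whose top morphism is monic while its bottom morphism is not, since the latter's $\BAN$-component is not monic. Symmetrically, to see that $\BAN\times\BOR$ is not quasi-abelian it suffices, by Definition~\ref{def:quasi-abelian-category}, to exhibit a pushout square whose top morphism is a kernel but whose bottom morphism is not; since $\BOR$ is not right quasi-abelian by Theorem~\ref{THM-BOR}, such a square exists in $\BOR$, and padding the $\BAN$-coordinate with the trivial square on an identity (an identity being the kernel of the map to the zero object) produces a pushout square in $\BAN\times\BOR$ whose top morphism is a kernel but whose bottom morphism is not---its $\BOR$-component being a kernel would contradict the choice of the square, because a component of a kernel is a kernel (kernels are unique up to isomorphism, and isomorphisms in a product are computed componentwise). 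Hence $\BAN\times\BOR$ is semi-abelian but neither integral nor quasi-abelian, which also establishes the general existence statement.

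There is no genuine obstacle here. The only slightly delicate point---as opposed to a purely formal verification---is the componentwise computation of kernels, cokernels, pushouts and pullbacks in $\CA_{1}\times\CA_{2}$, together with the consequence that a component of a kernel, or of a monomorphism, is again one; the substantive work has already been carried out in Theorems~\ref{THM-1} and~\ref{THM-BOR}.
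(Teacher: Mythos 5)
Your proposal is correct and follows essentially the same route as the paper: form the product of a semi-abelian, non-integral category ($\BAN$) with a semi-abelian, non-quasi-abelian one ($\BOR$), check that (co)kernels, monos/epis, pullbacks/pushouts and the coimage-to-image comparison are all computed componentwise, and then import each counterexample square into the product by padding the other coordinate with a trivial square. The only (immaterial) differences are that the paper pads with zero objects rather than identity squares and phrases the integrality failure via pullbacks of epimorphisms instead of pushouts of monomorphisms; both variants are available since, for semi-abelian categories, left and right integrality (and quasi-abelianness) coincide by Proposition~\ref{prop:semi-abelian-implies-left-right-equivalence}.
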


\begin{proof}

Let $\CA$ denote a semi-abelian category that is not integral (e.g.\ $\BAN$) and let $\CB$ denote a semi-abelian category that is not quasi-abelian (e.g.\ $\BOR$). Consider the \emph{product} category $\CA\times\CB$. 
The objects of $\CA\times\CB$ are pairs $(A,B)$, where $A\in\obj(\CA)$ and $B\in\obj(\CB)$, and morphisms in $\CA\times\CB$ are pairs $(f,g)$, where $f$ is a morphism in $\CA$ and $g$ is a morphism in $\CB$.

It is straightforward to check that $\CA\times\CB$ is additive and pre-abelian. In particular, (co)kernels in $\CA\times\CB$ are constructed component-wise; for example, the kernel of $(f,g)$ is 
\[
(\ker f, \ker g)\colon (\Ker f,\Ker g)\to (A,B)
\]
for $f\in\Hom_{\CA}(A,A')$ and $g\in\Hom_{\CB}(B,B')$.

As observed in \cite[pp.\ 167--168]{Rump01-almost-abelian-categories}, a pre-abelian category is semi-abelian if and only if the \emph{parallel} morphism  $h^{\sim}\colon \Coim h \to \Im h$ (that is, the canonical morphism from the coimage to the image) of a morphism $h$ is both monic and epic. It is easy to show that $(f,g)$ is monic (respectively, epic) if and only if $f,g$ are monic (respectively, epic) in their respective categories. 
Thus, since $\CA,\CB$ are both semi-abelian, the parallel morphism 
$(f,g)^{\sim} = (f^{\sim},g^{\sim})$ 
of $(f,g)$ is both monic and epic, and hence $\CA\times\CB$ is semi-abelian.

Since $\CA$ is not integral, but is semi-abelian, it cannot be left or right integral by Proposition \ref{prop:semi-abelian-implies-left-right-equivalence}. Therefore, as $\CA$ is not left integral, there is a pullback square 
\begin{equation*}
\label{eqn:pullback-square-in-semi-abelian-not-left-integral-A}
    \begin{tikzcd}
    P \arrow{d}[swap]{f'_{2}}\arrow{r}{f'_{1}}\commutes[\text{PB}]{dr}& A_{2}\arrow{d}{f_{2}} \\
    A_{1}\arrow[two heads]{r}[swap]{f_{1}} & A
    \end{tikzcd}
\end{equation*}
in $\CA$, where $f_{1}$ is an epimorphism but $f'_{1}$ is not. 
Since kernels in $\CA\times\CB$ are constructed component-wise, it follows that pullbacks are also determined by their components. Hence, we have the pullback square 
\begin{equation*}
\label{eqn:induced-pullback-square-in-AxB}
    \begin{tikzcd}
    (P,0_{\CB}) \arrow{d}[swap]{(f'_{2},0)}\arrow{r}{(f'_{1},0)}\commutes[\text{PB}]{dr}& (A_{2},0_{\CB})\arrow{d}{(f_{2},0)} \\
    (A_{1},0_{\CB})\arrow{r}[swap]{(f_{1},0)} & (A,0_{\CB})
    \end{tikzcd}
\end{equation*}
in $\CA\times\CB$, where $0_{\CB}$ is the zero object in $\CB$. Moreover, as $f_{1}\in\Hom_{\CA}(A_{1},A)$ and $0\in\Hom_{\CB}(0_{\CB},0_{\CB})$ are both epic, we see that $(f_{1},0)$ is epic; and $(f'_{1},0)$ cannot be epic since $f'_{1}$ is not. Consequently, $\CA\times\CB$ is not left integral and hence not integral.

Similarly, one can show that $\CA\times\CB$ is not quasi-abelian, and this concludes the proof.
\end{proof}

\medskip


\section{Categories of bornological vector spaces}
\label{SEC:BVS}

Below we consider categories of bornological vector spaces, in the sense introduced by Buchwalter \cite{Buchwalter} and Hogbe-Nlend \cite{Hogbe-French, Hogbe}. 
We follow the notation of Prosmans and Schneiders \cite{ProsmansSchneiders}, and consider categories whose objects are pairs $(X,\mathcal{B}_{X})$ where $X$ is a $k$-vector space and $\mathcal{B}_{X}$ is a convex bornology. 
Their morphisms are the so-called \emph{bounded} linear maps $f\colon X\rightarrow Y$, i.e.\ linear maps for which $f(B)\in\mathcal{B}_Y$ holds whenever $B\in\mathcal{B}_X$. 
See \cite[\S1]{ProsmansSchneiders} for more details. 
Notice that the term `bornological' in this section has a different meaning than in Theorem \ref{THM-BOR}. Here the bornology is an additional structure on a vector space, whereas in \S\ref{SEC:TVS} being bornological is a property that a locally convex space either enjoys or not.

We start again with identifying a category that is both quasi-abelian and integral.

\begin{thm}\label{THM-2} Let $k\in\{\mathbb{R},\mathbb{C}\}$. The category $\mathcal{B}c$ of (separated and non-separated) bornological vector spaces over $k$, furnished with bounded linear maps as morphisms, is quasi-abelian and integral.
\end{thm}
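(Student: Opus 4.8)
The plan is to follow the blueprint of the proof of Theorem \ref{THM-0} almost verbatim, with $\mathcal{B}c$ in place of $\SNOR$, $\LCS$ and $\TVS$. The fact that $\mathcal{B}c$ is quasi-abelian is already in the literature, namely Prosmans and Schneiders \cite{ProsmansSchneiders}, so the only thing that requires proof is integrality; and by Propositions \ref{prop:quasi-abelian-or-integral-implies-semi-abelian} and \ref{prop:semi-abelian-implies-left-right-equivalence} it is enough to verify either left or right integrality.

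First I would record the concrete description of kernels and cokernels in $\mathcal{B}c$. For a bounded linear map $f\colon X\rightarrow Y$, its kernel is the inclusion $f^{-1}(0)\rightarrow X$, where $f^{-1}(0)$ carries the bornology induced from $X$, and its cokernel is the quotient map $Y\rightarrow Y/\operatorname{ran}f$, where the target carries the quotient bornology. In contrast to the Hausdorff categories of Theorem \ref{THM-1}, no closure appears in the denominator here, precisely because $\mathcal{B}c$ also contains non-separated spaces---this is the feature flagged in the paragraph preceding Theorem \ref{THM-BOR}. From this one reads off, exactly as in Theorem \ref{THM-0}, that a morphism in $\mathcal{B}c$ is monic if and only if it is injective, and epic if and only if it is surjective.

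Next I would observe that pushouts and pullbacks in $\mathcal{B}c$ are computed on underlying vector spaces precisely as in $\rMod{k}$: the biproduct of two bornological vector spaces is the algebraic direct sum with the product bornology, a pullback of $X\rightarrow Z\leftarrow Y$ is the algebraic pullback $\{(x,y)\mid f(x)=g(y)\}$ with the induced bornology, and a pushout is the algebraic pushout carrying the quotient bornology (being the cokernel of the appropriate map into the biproduct). Since a pushout of an injective linear map is injective in $\rMod{k}$, and a pullback of a surjective linear map is surjective, the combination of the previous paragraph with this observation shows that pushouts of monomorphisms are monomorphisms and pullbacks of epimorphisms are epimorphisms in $\mathcal{B}c$. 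Hence both conditions of Definition \ref{def:integral-category} hold and $\mathcal{B}c$ is integral.

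The step that I expect to need the most care is the bookkeeping in the second and third paragraphs: namely that the forgetful functor $\mathcal{B}c\rightarrow\rMod{k}$ preserves kernels, cokernels, pullbacks and pushouts, i.e.\ that each of these is carried by the indicated bornology. This is where the specific behaviour of convex bornologies enters, and it is verified by checking the relevant universal properties against bounded linear maps; once it is in place, the rest of the argument is formal and identical to the proof of Theorem \ref{THM-0}.
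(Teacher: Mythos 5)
Your proposal is correct and follows essentially the same route as the paper's proof: quasi-abelianity is quoted from Prosmans--Schneiders, the explicit kernels (induced bornology) and cokernels (quotient bornology, with no closure) give that monic means injective and epic means surjective, and then the argument of Theorem \ref{THM-0} applies because pushouts and pullbacks are computed algebraically as in $\rMod{k}$. The extra care you flag about the forgetful functor preserving (co)kernels and hence pullbacks/pushouts is exactly the content the paper delegates to \cite[Prop.\ 1.5 and Def.\ 1.4]{ProsmansSchneiders}.
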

\begin{proof} By \cite[Prop.\ 1.8]{ProsmansSchneiders} the category is quasi-abelian.
By \cite[Prop.\ 1.5]{ProsmansSchneiders}, for a morphism $f\colon X\rightarrow Y$ the kernel is the inclusion map $f^{-1}(0)\rightarrow X$ and the cokernel is the quotient map $Y\rightarrow Y/\operatorname{ran}f$. Here, $f^{-1}(0)$ is endowed with the induced bornology and $Y/\operatorname{ran}f$ with the quotient bornology; see \cite[Def.\ 1.4]{ProsmansSchneiders}. 
One can now proceed as in the proof of Theorem \ref{THM-0}.
%
%
%
\end{proof}

The other two categories that are usually studied in the context of bornologies are both quasi-abelian, but neither of them is integral.

\begin{thm}
\label{THM-3}
Let $k\in\{\mathbb{R},\mathbb{C}\}$. 
The categories \vspace{-4pt}
\begin{compactitem}
\item[\emph{(i)}] $\wideparen{\mathcal{B}}c$ of separated bornological vector spaces; and\vspace{3pt}
\item[\emph{(ii)}] $\widehat{\mathcal{B}}c$ of complete bornological vector spaces,
\end{compactitem}
over $k$, furnished with bounded linear maps as morphisms, are quasi-abelian but neither left nor right integral.
\end{thm}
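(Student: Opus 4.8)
The plan is to follow the template established in the proofs of Theorems \ref{THM-1} and \ref{THM-3}'s sibling results. First I would recall from Prosmans and Schneiders \cite[Prop.\ 1.8]{ProsmansSchneiders} that both $\wideparen{\mathcal{B}}c$ and $\widehat{\mathcal{B}}c$ are quasi-abelian, so that by Propositions \ref{prop:quasi-abelian-or-integral-implies-semi-abelian} and \ref{prop:semi-abelian-implies-left-right-equivalence} it suffices to disprove right integrality in each case, i.e.\ to exhibit a pushout square whose top morphism is monic (injective) but whose bottom morphism fails to be injective. As in Theorem \ref{THM-1}, the obstruction is the separation/completion step: in $\wideparen{\mathcal{B}}c$ the cokernel of $f\colon X\to Y$ is $Y\to (Y/\operatorname{ran}f)^{\text{sep}}$, the \emph{separated quotient}, and in $\widehat{\mathcal{B}}c$ it is the completion of that; in both cases a pushout computed algebraically as in $\rMod{k}$ can acquire a nontrivial kernel after this closure-type operation is applied.

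The key step is therefore to produce a bornological analogue of the pair $(i\colon\ell^1\hookrightarrow\czero,\ \Sigma\colon\ell^1\to k)$ used in Theorem \ref{THM-1}. The natural candidate is to equip $\ell^1$ and $\czero$ (or suitable sequence spaces) with their von Neumann bornologies, so that $i$ is bounded and injective and $\Sigma$ is bounded, form $P=(\czero\oplus k)/\operatorname{ran}\bigl[\begin{smallmatrix}\phantom{\shortminus}i\\\shortminus\Sigma\end{smallmatrix}\bigr]$ with the quotient bornology, and then pass to the separated (resp.\ complete) bornological space. The claim to verify is that the element $\columnvec{0}{1}$ lies in the bornological closure of $\operatorname{ran}\bigl[\begin{smallmatrix}\phantom{\shortminus}i\\\shortminus\Sigma\end{smallmatrix}\bigr]$, using the very same sequence $x^n=(-1/n,\dots,-1/n,0,\dots)$: the images $i(x^n)$ form a bounded (indeed norm-null) set in $\czero$ and $-\Sigma(x^n)=1$ for all $n$, which should force $(p\circ i_2)(1)=0$ after separation. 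For the complete case one checks that $P$ is already complete (it is a quotient of a Banach-type bornological space by a closed subspace, hence the argument of Theorem \ref{THM-COM} applies), so the same diagram works verbatim in $\widehat{\mathcal{B}}c$.

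Alternatively, and more in the spirit of how Theorem \ref{THM-1} handled categories (vi)--(ix), I would note that $\mathcal{B}c$ contains $\wideparen{\mathcal{B}}c$ and $\widehat{\mathcal{B}}c$ as full subcategories, but since these reflect kernels rather than cokernels of $\mathcal{B}c$ the reduction is not automatic; instead one argues directly as above. Finally, monomorphisms in both categories are injective bounded maps — this follows from the description of kernels in \cite[Prop.\ 1.5]{ProsmansSchneiders} together with the fact that passing to a separated or complete object does not change the underlying vector space of a subspace — so the top map $i'$ (or $i$) in the pushout square is genuinely monic while the bottom map is not, completing the proof.

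The main obstacle I anticipate is pinning down the precise form of the cokernel and hence the pushout in $\widehat{\mathcal{B}}c$: one must be careful that the completion of a separated bornological space interacts correctly with quotients, and that the element $\columnvec{0}{1}$ is killed already at the separation stage (so that the completion step adds nothing new and the counterexample does not inadvertently become a genuine nonzero element). Verifying that the relevant subspace is bornologically closed, and that $P$ in the $\ell^1$/$\czero$ example is complete, is the delicate point; everything else is a routine transcription of the arguments in Theorems \ref{THM-0}, \ref{THM-1} and \ref{THM-2}.
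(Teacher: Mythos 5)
Your proposal is correct and follows essentially the same route as the paper: quasi-abelianness is quoted from Prosmans--Schneiders (the precise references are \cite[Prop.\ 4.10 and Prop.\ 5.6]{ProsmansSchneiders}, not Prop.\ 1.8, which concerns $\mathcal{B}c$), and non-integrality is disproved by transplanting the $(i,\Sigma)$ pushout from Theorem \ref{THM-1} with von Neumann bornologies and the same sequence $x^n$. The one step you leave implicit but must spell out is the lemma that for a Banach space with its bornology of norm-bounded sets, bornological (Mackey) convergence coincides with norm convergence --- so the bornological closure of $\operatorname{ran}\bigl[\begin{smallmatrix}\phantom{\shortminus}i\\ \shortminus\Sigma\end{smallmatrix}\bigr]$ equals its norm closure and $P$ agrees, as a vector space, with the $P$ of Theorem \ref{THM-1}; with the cokernel in $\widehat{\mathcal{B}}c$ likewise being the quotient by the bornological closure, the argument then transfers verbatim, exactly as in the paper.
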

\begin{proof} By \cite[Prop.\ 4.10 and Prop.\ 5.6]{ProsmansSchneiders} both categories are quasi-abelian. 
If $f\colon X\rightarrow Y$ is a morphism in either one of the two categories, then its cokernel is given by the quotient map $Y\rightarrow Y/\overline{\operatorname{ran}f}$; see \cite[Prop.\ 4.6 and Prop.\ 5.6]{ProsmansSchneiders}. The \emph{closure} $\overline{\operatorname{ran}f}$ is given as the intersection of all closed subspaces $U$ of $Y$ containing $\operatorname{ran}f$. 
A subspace $U$ is \emph{closed} if limits of sequences in $U$ that converge in $X$ belong to $U$; see \cite[Def.\ 4.3]{ProsmansSchneiders}. 
Finally, convergence is defined as follows: $(x_n)_{n\in\mathbb{N}}\subseteq X$ \emph{converges to} $x\in X$ if there exists an absolutely convex set $B\in\mathcal{B}_X$, such that $(x_n)_{n\in\mathbb{N}}$ converges to $x$ in the normed space
\[
X_{B}=(\operatorname{span} B,\|\cdot\|_{B}) \; \text{ where } \; \|x\|_{B}=\inf\{\lambda>0\:|\:x\in\lambda{}B\};
\]
see \cite[Def.\ 4.1]{ProsmansSchneiders}.

Assume now that $X=(X,\|\cdot\|)$ is a Banach space that we furnish with the bornology $\mathcal{B}$ of norm-bounded sets; see, for example, \cite[p.\ 21]{Hogbe}. 
Then $(X,\mathcal{B})$ is an object of both $\wideparen{\mathcal{B}}c$ and  $\widehat{\mathcal{B}}c$. 
Moreover, a sequence $(x_n)_{n\in\mathbb{N}}\subseteq X$ converges in norm to $x\in X$ if and only if $(x_n)_{n\in\mathbb{N}}$ converges to $x$ with respect to $\mathcal{B}$. 
Indeed, if $(x_n)_{n\in\mathbb{N}}$ converges to $x$ in norm, then we choose $B$ to be the unit ball of $X$ and in view of $(X,\|\cdot\|)=(X_B,\|\cdot\|_B)$ we get convergence in bornology. Conversely, if $(x_n)_{n\in\mathbb{N}}$ converges to $x$ in some $X_B$ for $B\in\mathcal{B}$ absolutely convex, we conclude that $(x_n)_{n\in\mathbb{N}}$ converges to $x$ in norm from the fact that the inclusion $(X_B,\|\cdot\|_B)\rightarrow (X,\|\cdot\|)$ is continuous; see \cite[p.\ 282]{MeiseVogt}.

Now consider the maps $i\colon\ell^1\rightarrow\czero$ and $\Sigma\colon\ell^1\rightarrow k$ from the proof of Theorem \ref{THM-1} in $\wideparen{\mathcal{B}}c$. 
This is possible by the above and since continuous linear maps between Banach spaces send bounded sets to bounded sets. 
In view of the first part of this proof the pushout of $i$ along $\Sigma$ in $\wideparen{\mathcal{B}}c$ is given by
\begin{equation*}\label{A}
\begin{tikzcd}[column sep=21pt,row sep=21pt]
\ell^1\arrow[d, swap, "\Sigma"]\arrow[r, "i"]& \czero \arrow[d, "q\hspace{0.75pt}\circ\hspace{0.75pt}i_1"]\\
k\arrow[r, swap, "q\hspace{0.75pt}\circ\hspace{0.75pt}i_2"]& P
\end{tikzcd}
\end{equation*}
where $P=(\czero\oplus\,k)/{\textstyle\overline{\operatorname{ran}\scriptstyle\columnvec{\phantom{\shortminus}i}{\shortminus\Sigma}}}$ coincides, as a vector space, with the space $P$ from the proof of Theorem \ref{THM-1}. 
Thus, in the above diagram, $i$ is injective and $q\circ i_2=0$. By \cite[Prop.\ 4.6]{ProsmansSchneiders}, in $\wideparen{\mathcal{B}}c$ the kernel of a morphism is the preimage of zero endowed with the induced bornology. 
Thus, $i$ is monic but its pushout is not.

To complete the proof it is enough to observe that the preceding paragraph can be repeated verbatim for $\widehat{\mathcal{B}}c$.
\end{proof}

\medskip


\section{Projectives and injectives}
\label{SEC:PROJECTIVES-INJECTIVES}

Projective objects in an arbitrary category generalise the notion of projective modules arising in algebra. 
As such, they have become important objects of study in homological algebra. 
However, suitable notions of projectivity have also been studied in the categories we have seen so far. 
We focus on projectivity and leave the dual notions related to injectivity to the reader. 
Let $\CA$ be a locally small category. 
An object $P\in\CA$ is called \emph{projective} if, for every epimorphism $f\colon X\to Y$  the induced map $\Hom_{\CA}(P,f)\colon \Hom_{\CA}(P,X)\to\Hom_{\CA}(P,Y)$ is surjective.
We say $\CA$ has \emph{enough projectives} if for each $A\in\CA$ there is an epimorphism $P\to A$ with $P$ projective. 

In addition to the above, the following concept has been introduced by Osborne \cite{Osborne}, in order to address the fact that in non-abelian categories the classes of epimorphisms and cokernels do not coincide.
\begin{defn}
\label{def:quasi-projective}
\cite[Def.\ 7.52]{Osborne} 
Let $\CA$ be a pre-abelian category. 
An object $P\in\CA$ is called \emph{quasi-projective} if, for every cokernel $f\colon X\to Y$, the map $\Hom_{\CA}(P,f)$ is surjective. 
We say $\CA$ has \emph{enough quasi-projectives} if for each $A\in\CA$ there is a cokernel $P\to A$ with $P$ quasi-projective.
\end{defn}

We now use a connection between the notions of \S\ref{SEC:PRE-ABELIAN-CATEGORIES} and the ones just introduced, in order to derive some interesting consequences of our main results. 
For Proposition \ref{prop:enough-quasi-projectives-implies-left-quasi-abelian-enough-projs-implies-integral}(i) notice that in \cite{Rump01-almost-abelian-categories} the phrase `has strictly enough projectives' is equivalent to the phrase `has enough quasi-projectives' that we use here. 
\begin{prop}
\label{prop:enough-quasi-projectives-implies-left-quasi-abelian-enough-projs-implies-integral} 
Suppose $\CA$ is a pre-abelian category.\vspace{-4pt}
\begin{compactitem}
    \item[\emph{(i)}] \cite[Prop.\ 11]{Rump01-almost-abelian-categories} If $\CA$ has enough quasi-projectives (respectively, quasi-injectives), then $\CA$ is left (respectively, right) quasi-abelian.\vspace{3pt}
    \item[\emph{(ii)}] \cite[Prop.\ 3.9]{BuanMarsh} 
    If $\CA$ has enough projectives (respectively, injectives), then $\CA$ is left (respectively, right) integral.
\end{compactitem}
\end{prop}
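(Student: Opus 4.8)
The plan is to establish both parts by the same kind of pullback diagram chase, using the defining lifting property of projective objects against epimorphisms for part~(ii) and of quasi-projective objects against cokernels for part~(i); the two \emph{right} statements (for quasi-injectives, respectively injectives) then follow by passing to the opposite category.

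Part~(ii) is short. Given a pullback square with projections $a\colon P\to B$ and $b\colon P\to C$ over a cospan $c\colon B\to D$, $d\colon C\to D$, suppose $d$ is an epimorphism; we must show that $a$ is. Since $\CA$ has enough projectives, choose an epimorphism $\pi\colon Q\to B$ with $Q$ projective. As $d$ is an epimorphism and $Q$ is projective, $\Hom_{\CA}(Q,d)$ is surjective, so $c\circ\pi=d\circ h$ for some $h\colon Q\to C$; the universal property of the pullback then supplies a morphism $e\colon Q\to P$ with $a\circ e=\pi$. Because $\pi$ is an epimorphism, so is $a$. Hence epimorphisms are stable under pullback and $\CA$ is left integral.

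For part~(i) I would first record the auxiliary claim that, in a pre-abelian category with enough quasi-projectives, \emph{the pullback of any cokernel along any morphism is an epimorphism}; this is proved exactly as in~(ii), with ``projective'' replaced by ``quasi-projective'' and the lift taken through the cokernel rather than through~$d$. Now let $a\colon P'\to Z$ be the pullback of a cokernel $f\colon X\to Y$ along $g\colon Z\to Y$; what remains is to upgrade ``$a$ is an epimorphism'' to ``$a$ is a cokernel'', that is, to prove $a=\cok(\ker a)$. Pick a cokernel $\pi\colon Q\to Z$ with $Q$ quasi-projective and form the iterated pullback
\[
\begin{tikzcd}
\widetilde{P}\arrow[r, "q"]\arrow[d, swap, "w"]\commutes[\text{PB}]{dr}&Q\arrow[d, "\pi"]\\
P'\arrow[r, swap, "a"]&Z
\end{tikzcd}
\]
in which $\widetilde{P}=P'\times_{Z}Q=X\times_{Y}Q$, the second equality because pullbacks compose. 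Since $Q$ is quasi-projective and $f$ is a cokernel, $g\circ\pi$ factors through $f$, so the projection $q$ splits; in particular $q=\cok(\ker q)$. On the other hand $w$, being the pullback of the cokernel $\pi$ along~$a$, is an epimorphism by the auxiliary claim. Combining this with $\pi=\cok(\ker\pi)$ and the standard fact that $w$ induces an isomorphism $\ker q\xrightarrow{\sim}\ker a$ while $q$ induces an isomorphism $\ker w\xrightarrow{\sim}\ker\pi$, a short diagram chase shows that every morphism $t\colon P'\to W$ with $t\circ\ker a=0$ factors uniquely through~$a$. Thus $a=\cok(\ker a)$, so cokernels are stable under pullback and $\CA$ is left quasi-abelian.

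The step I expect to be the crux is this last upgrade from ``epimorphism'' to ``cokernel''. Extracting an epimorphism out of the pullback is cheap, but pre-abelian categories contain epimorphisms that fail to be cokernels, so a genuine extra device is needed: one resolves $Z$ by a quasi-projective cover so that the fibre product $P'\times_{Z}Q$ becomes a pullback of $f$ itself --- which is exactly what forces one projection to split --- and simultaneously reinvests the auxiliary claim to control the other projection. Keeping track of this nested pullback, and verifying the two kernel identifications carefully, is where the real work lies; the remaining chase is routine.
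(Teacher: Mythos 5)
Your proof is correct, and since the paper gives no argument of its own for this proposition (it simply cites Rump's Prop.\ 11 and Buan--Marsh's Prop.\ 3.9), your write-up is in effect a faithful reconstruction of the standard proofs in those references: part (ii) is exactly the projective-cover-plus-universal-property argument, and part (i) correctly supplies the extra device needed to upgrade ``epimorphism'' to ``cokernel'', namely pulling back along a quasi-projective cover of $Z$ so that one projection splits, and using the kernel identifications $\Ker q\cong\Ker a$ and $\Ker w\cong\Ker\pi$ together with the epimorphy of $w$ to verify the universal property of $\cok(\ker a)$. All the steps you leave implicit (the pasting law, split epimorphisms being cokernels of their kernels, and the two kernel isomorphisms for pullback squares) are standard and check out.
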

Suppose $\CA$ is a pre-abelian category. 
Although being projective implies being quasi-projective, having enough projectives does not necessarily imply having enough quasi-projectives for $\CA$; see \cite[pp.\ 242--243]{Osborne}. 
In particular, this means that the conclusion of Proposition \ref{prop:enough-quasi-projectives-implies-left-quasi-abelian-enough-projs-implies-integral}(i) cannot be included in the conclusion of Proposition \ref{prop:enough-quasi-projectives-implies-left-quasi-abelian-enough-projs-implies-integral}(ii).

\begin{thm}
\label{thm:categories-dont-have-enough-quasi-projs-injs}
The following statements hold.\vspace{-4pt}
\begin{compactitem}
    \item[\emph{(i)}] The categories 
    $\BAN$, 
    $\NOR$, 
    $\FRE$, 
    $\HDLCS$, 
    $\HDTVS$, 
    $\NUC$, 
    $\NUCFRE$, 
    $\FS$, 
    $\FH$, 
    $\wideparen{\mathcal{B}}c$ and 
    $\widehat{\mathcal{B}}c$ 
    have neither enough projectives nor enough injectives.\vspace{3pt} 
    \item[\emph{(ii)}] The category $\BOR$ has neither enough quasi-projectives nor enough quasi-injectives.\vspace{3pt} 
    \item[\emph{(iii)}] The category 
    $\COM$ has neither enough quasi-projectives, nor enough projectives, nor enough injectives.\vspace{3pt} 
    \item[\emph{(iv)}] The category 
    $\LB$ has neither enough quasi-injectives, nor enough projectives, nor enough injectives.
\end{compactitem}
\end{thm}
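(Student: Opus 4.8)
The key tool is the contrapositive of Proposition \ref{prop:enough-quasi-projectives-implies-left-quasi-abelian-enough-projs-implies-integral}: if a pre-abelian category fails to be left (respectively, right) quasi-abelian, then it cannot have enough quasi-projectives (respectively, quasi-injectives); and if it fails to be left (respectively, right) integral, then it cannot have enough projectives (respectively, injectives). Combining this with the implication "projective $\Rightarrow$ quasi-projective" and its dual, one sees that failing to be left integral already kills enough projectives, and failing to be left quasi-abelian kills both enough quasi-projectives \emph{and} enough projectives. So the whole theorem reduces to reading off, for each category in the list, which of the four "sidedness" properties (left/right quasi-abelian, left/right integral) fail — and all of that information has already been assembled in Theorems \ref{THM-0}--\ref{THM-BOR} and \ref{THM-2}--\ref{THM-3}, together with Propositions \ref{prop:quasi-abelian-or-integral-implies-semi-abelian} and \ref{prop:semi-abelian-implies-left-right-equivalence} which promote one-sided properties to two-sided ones in the semi-abelian setting.

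\textbf{Part (i).} Each of $\BAN$, $\NOR$, $\FRE$, $\HDLCS$, $\HDTVS$, $\NUC$, $\NUCFRE$, $\FS$, $\FH$ is, by Theorem \ref{THM-1}, quasi-abelian (hence semi-abelian) but not left or right integral; likewise $\wideparen{\mathcal{B}}c$ and $\widehat{\mathcal{B}}c$ by Theorem \ref{THM-3}. By Proposition \ref{prop:enough-quasi-projectives-implies-left-quasi-abelian-enough-projs-implies-integral}(ii), not being left integral rules out enough projectives, and not being right integral rules out enough injectives. (Alternatively: since these categories \emph{are} quasi-abelian, cokernels equal epimorphisms and kernels equal monomorphisms, so enough projectives would be the same as enough quasi-projectives; but then part (ii) of the cited proposition combined with integrality failure gives a contradiction too — either route works, and I would use the cleaner first one.)

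\textbf{Parts (ii)--(iv).} For $\BOR$: by Theorem \ref{THM-BOR} it is integral but neither left nor right quasi-abelian, so Proposition \ref{prop:enough-quasi-projectives-implies-left-quasi-abelian-enough-projs-implies-integral}(i) denies both enough quasi-projectives and enough quasi-injectives. For $\COM$: by Theorem \ref{THM-COM} it is right quasi-abelian but not left quasi-abelian and not right integral; together with the remark following Theorem \ref{THM-LB} that $\COM$ is also not left quasi-abelian or left integral, we get: not left quasi-abelian $\Rightarrow$ no enough quasi-projectives, and since projective implies quasi-projective, no enough projectives either (one must be slightly careful here, as enough projectives need not imply enough quasi-projectives in general — the correct deduction is via Proposition \ref{prop:enough-quasi-projectives-implies-left-quasi-abelian-enough-projs-implies-integral}(ii) and the failure of left integrality, which directly forbids enough projectives); and not right integral $\Rightarrow$ no enough injectives. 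Dually for $\LB$, using Theorem \ref{THM-LB} and the same remark: not right quasi-abelian forbids enough quasi-injectives, failure of right integrality forbids enough injectives, and failure of left integrality forbids enough projectives.

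\textbf{Main obstacle.} There is no deep obstacle — the work is already done in the earlier theorems — but the one genuinely delicate point is the logical bookkeeping in (iii) and (iv): since "enough projectives $\Rightarrow$ enough quasi-projectives" \emph{fails} in general (as the excerpt notes, following Osborne), one cannot conclude the absence of enough projectives merely from the absence of enough quasi-projectives, and must instead invoke the integrality failure directly through Proposition \ref{prop:enough-quasi-projectives-implies-left-quasi-abelian-enough-projs-implies-integral}(ii). I would write the proof as a short case-by-case table and flag this subtlety explicitly so the reader does not mistakenly shortcut the argument.
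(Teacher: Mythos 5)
Your proposal is correct and follows essentially the same route as the paper: read off the failures of left/right integrality and left/right quasi-abelianness from Theorems \ref{THM-1}, \ref{THM-3}, \ref{THM-BOR}, \ref{THM-COM} and \ref{THM-LB} (promoted to two-sided failures via Propositions \ref{prop:quasi-abelian-or-integral-implies-semi-abelian} and \ref{prop:semi-abelian-implies-left-right-equivalence} where needed), and apply the contrapositive of Proposition \ref{prop:enough-quasi-projectives-implies-left-quasi-abelian-enough-projs-implies-integral}; your care in (iii)--(iv) to derive the absence of enough projectives from the failure of left integrality rather than from the absence of enough quasi-projectives is exactly the right bookkeeping. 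One caveat: the parenthetical ``alternative route'' in part (i) is wrong --- in a quasi-abelian category epimorphisms need not be cokernels (in $\BAN$ the inclusion $\ell^1\to\czero$ is epic but not a cokernel), so ``enough projectives'' and ``enough quasi-projectives'' are not interchangeable there; since you explicitly discard that route in favour of the first one, the proof stands, but the aside should be deleted.
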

\begin{proof}
(i):\; Let $\CA\in\{\BAN,\NOR,\FRE,\HDLCS,\HDTVS,\NUC,\NUCFRE,\FS,\FH\}$. 
Then $\CA$ is quasi-abelian by Theorem \ref{THM-1}, and so semi-abelian by Proposition \ref{prop:quasi-abelian-or-integral-implies-semi-abelian}. 
Thus, left integrality is equivalent to right integrality for $\CA$ by Proposition \ref{prop:semi-abelian-implies-left-right-equivalence}. 
But $\CA$ is not integral by Theorem \ref{THM-1},  and so cannot have either enough projectives or enough injectives by Proposition \ref{prop:enough-quasi-projectives-implies-left-quasi-abelian-enough-projs-implies-integral}. For $\CA\in\{\wideparen{\mathcal{B}}c, \widehat{\mathcal{B}}c\}$ one can argue analogously by employing corresponding results from \S\ref{SEC:BVS}.

(ii):\; Similar to (i), using Theorem \ref{THM-BOR}.

(iii):\; The category $\COM$ is neither left quasi-abelian, nor left integral, not right integral by Theorem \ref{THM-COM}. 
Thus, by Proposition \ref{prop:enough-quasi-projectives-implies-left-quasi-abelian-enough-projs-implies-integral}, $\COM$ can have neither enough quasi-projectives, nor enough projectives, nor enough injectives.

(iv):\; Similar to (iii), using Theorem \ref{THM-LB}.
\end{proof}
We mention that for some of the quasi-abelian categories we have seen so far, it has been previously established whether or not they have enough quasi-projectives or quasi-injectives. 
Indeed, $\BAN$, $\FRE$ and $\LCS$ have enough quasi-injectives; see \cite[Thm.\ 2.2.1]{Wengenroth03}. 
Moreover, $\CB c$, $\wideparen{\mathcal{B}}c$ and $\widehat{\mathcal{B}}c$ have enough quasi-projectives; see \cite[Prop.\ 2.13, Prop.\ 4.11 and Prop.\ 5.8]{ProsmansSchneiders}. 
Finally, $\LCS$ does not have enough quasi-projectives; see Ge{\u{\i}}ler \cite{Geiler}. 

We remark also that in the references just cited, the term `projective' is used to mean what we call quasi-projective. 
Furthermore, in a quasi-abelian category, an object is quasi-projective if and only if it is  `projective' in the sense of B{\"u}hler \cite[Def. 11.1]{Buhler}. 

We refer the reader to the diagram on p.\ \pageref{PIC} for a graphic summary of all the examples that we have studied in this article. 

%
%

\medskip

\section{The admissible intersection property}
\label{SEC:AIP}

Let $\mathcal{A}$ be a pre-abelian category. We say that $\mathcal{A}$ has \emph{admissible intersections} if there exists an exact structure $\mathcal{E}$ on $\mathcal{A}$ such that for any admissible monomorphisms 
$c\colon B \rightarrowtail D$ and $d\colon C \rightarrowtail D$, in the pullback diagram
\[
\begin{tikzcd}
A\arrow{r}{a}\arrow{d}[swap]{b}\commutes[\mathrm{PB}]{dr}& B \arrow[tail]{d}{c}\\
C \arrow[tail]{r}[swap]{d} & D
\end{tikzcd}
\]
in $\CA$, the morphisms $a$ and $b$ are also admissible monomorphisms. This property is introduced by Hassoun and Roy in \cite{HR} and has been recently considered by Br{\"u}stle, Hassoun and Tattar in \cite[\S4]{BHT}, where they showed that if $\mathcal{A}$ has admissible intersections, then $\mathcal{A}$ is quasi-abelian. 
We prove here that the converse also holds, and hence together a new characterisation of quasi-abelian categories is established. 
For the convenience of the reader, and with the kind permission of the authors of \cite{BHT}, we also include their part of the proof below.

\begin{thm}[Br{\"u}stle, Hassoun, Shah, Tattar, Wegner]
\label{thm:AI-iff-quasi-abelian}
A pre-abelian category $\mathcal{A}$ is quasi-abelian if and only if it has admissible intersections.
\end{thm}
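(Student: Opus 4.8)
The plan is to prove the two implications separately. One direction---that admissible intersections imply quasi-abelian---is already due to Br{\"u}stle, Hassoun and Tattar \cite[\S4]{BHT}, and the excerpt promises to reproduce their argument; so I would state it, then spend the bulk of the effort on the converse. For the reader's convenience I would recall the BHT direction briefly: if $\CA$ carries an exact structure $\CE$ with admissible intersections, one shows that $\CE$ must in fact consist of \emph{all} kernel--cokernel pairs, and then invokes Schneiders' characterisation that a pre-abelian category admitting the maximal exact structure (all kernel--cokernel pairs) is precisely a quasi-abelian category. The admissible-intersection hypothesis is exactly what forces a kernel morphism to be admissible, by pulling it back along itself or along a generic morphism, so that enough kernel--cokernel pairs become conflations to conclude maximality.

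For the converse, suppose $\CA$ is quasi-abelian. The natural candidate exact structure is $\CE = $ the class of all kernel--cokernel pairs; by Schneiders \cite{Schneiders} this is indeed an exact structure on any quasi-abelian category, and its admissible monomorphisms are exactly the kernels. So the task reduces to: \emph{in a quasi-abelian category, the pullback of a kernel along any morphism is again a kernel, and the second pullback leg $b$ is also a kernel whenever $c$ and $d$ are kernels.} The first assertion is immediate from the definition of (left/right) quasi-abelian together with Proposition~\ref{prop:semi-abelian-implies-left-right-equivalence}: kernels are stable under pullback. The point requiring work is that in the pullback square with both $c\colon B\rightarrowtail D$ and $d\colon C\rightarrowtail D$ kernels, the morphism $a\colon A\to B$ (equivalently $b\colon A\to C$ by symmetry) is a kernel, not merely a monomorphism.

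Here is how I would extract that. First, since $c$ is a kernel and kernels are stable under pullback, $a$ is a kernel --- this is already the whole statement for $a$, and by the symmetric roles of $B,C$ the morphism $b$ is a kernel too. Wait: that already finishes it, so the only genuine content is that the pullback of a kernel along an \emph{arbitrary} morphism is a kernel, which is the definition of right quasi-abelian (after passing from the pushout formulation via duality, or directly using that left quasi-abelian means cokernels are stable under pullback and dualising; one should be careful to cite the correct one of Definition~\ref{def:quasi-abelian-category}). Concretely: $c$ kernel $\Rightarrow$ $c = \ker(\cok c)$; form the composite $C\xrightarrow{d} D \xrightarrow{\cok c}\cdot$ and take \emph{its} kernel; a diagram chase shows the resulting kernel is the pullback of $c$ along $d$, hence $a$ is that kernel. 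The main obstacle is therefore not conceptual depth but bookkeeping: one must track the left/right variants carefully (the paper's convention has ``left quasi-abelian'' $=$ cokernels stable under pullback), make sure one is using ``kernels stable under pullback'' which by Definition~\ref{def:quasi-abelian-category} is \emph{right} quasi-abelian phrased via pushout-duality, and confirm that the exact structure of all kernel--cokernel pairs is available (Schneiders). Once the correct dual statement is pinned down, the verification that every hypothesis of an exact structure with admissible intersections holds is routine and I would present it compactly.
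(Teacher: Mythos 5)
Your proposal is correct and follows essentially the same route as the paper: for the forward direction you equip $\CA$ with the exact structure of all kernel--cokernel pairs (Schneiders) and reduce to showing that the pullback of a kernel is a kernel, and for the converse you defer to the Br\"ustle--Hassoun--Tattar argument that the given exact structure must already be the class of all kernel--cokernel pairs. One point needs correcting, though, because you assert it twice before abandoning it: ``kernels are stable under pullback'' is \emph{not} the definition of (left or right) quasi-abelian, nor does it follow from Definition~\ref{def:quasi-abelian-category} ``via duality'' --- dualising ``kernels stable under pushout'' yields ``cokernels stable under pullback'', i.e.\ it exchanges the left and right conditions with each other and never produces the statement you want. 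The justification that actually works is the explicit one you give at the end: if $c=\ker(\cok c)$, then the pullback of $c$ along $d$ is $\ker\bigl((\cok c)\circ d\bigr)$, so it is a kernel. (The paper cites the dual of Kelly, Prop.~5.2, for exactly this.) Note that this argument is valid in \emph{any} pre-abelian category; the quasi-abelian hypothesis enters the forward direction only through Schneiders' theorem that the kernel--cokernel pairs form an exact structure, not through the pullback step. Your sketch of the converse is also slightly off in one detail: the admissible-intersection hypothesis is applied not by ``pulling a kernel back along itself'' but by realising a given kernel $f$ as the pullback of the two sections $B\to B\oplus C$ given by $\columnvec{1}{g}$ and $\columnvec{1}{0}$, which are automatically admissible monomorphisms; since you explicitly defer to \cite{BHT} here, this is acceptable, but the trick is worth recording correctly.
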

\begin{proof} 
$(\Longrightarrow)$\; Let $\CA$ be a quasi-abelian category. 
Endowing it with the class $\CE$ of all kernel-cokernel pairs in $\CA$ yields an exact category $(\CA,\CE)$ as $\CA$ is quasi-abelian; see \cite[Rmk.\ 1.1.11]{Schneiders}. The class of admissible monomorphisms in $(\CA,\CE)$ is thus precisely the class of kernels in $\CA$. Let $c\colon B \rightarrowtail D$ and $d\colon C \rightarrowtail D$ be arbitrary admissible monomorphisms in $(\CA,\CE)$, i.e.\ $c,d$ are kernels. Then in the pullback diagram 
\[
\begin{tikzcd}
A\arrow{r}{a}\arrow{d}[swap]{b}\commutes[\mathrm{PB}]{dr}& B \arrow[tail]{d}{c}\\
C \arrow[tail]{r}[swap]{d} & D 
\end{tikzcd}
\]
the morphisms $a$ and $b$ are also kernels in $\CA$ by the dual of Kelly \cite[Prop.\ 5.2]{Kelly}. That is, $a,b$ are admissible monomorphisms, and we see that $\CA$ has admissible intersections.

$(\Longleftarrow)$\; Conversely, suppose $\CA$ has admissible intersections and let $\CE$ be an exact structure on $\CA$ witnessing this. 
We claim that $\mathcal{E}$ coincides with the class of all kernel-cokernel pairs in $\mathcal{A}$. 
%
Assume for contradiction that $A\stackrel{\scriptscriptstyle f}{\rightarrow}B\stackrel{\scriptscriptstyle g}{\rightarrow}C$ is a kernel-cokernel pair not belonging to $\CE$. Then the morphisms 
$
{\scriptstyle\columnvec{1}{g}}
 \colon B\rightarrow B\oplus C \;\text{ and }\; {\scriptstyle\columnvec{1}{0}}\colon B\rightarrow B\oplus C
$
are both sections, and thus admissible monomorphisms. 
The pullback of these two morphisms is given by
\[
\begin{tikzcd}
A\arrow{r}{f}\arrow{d}[swap]{f}\commutes[\mathrm{PB}]{dr}& B \arrow[tail]{d}{{\textstyle\columnvec{1}{g}}}\\
B \arrow[tail]{r}[swap]{{\textstyle\columnvec{1}{0}}} & B\oplus C
\end{tikzcd}
\]
Thus, we conclude that $f$ is an admissible monomorphism since $\mathcal{A}$ has admissible intersections. 
Contradiction. 
Hence, $\CE$ must contain all kernel-cokernel pairs, and so every (co)kernel is admissible. Finally, using the axioms for an exact category (see e.g.\ \cite[Def.\ 2.1]{Buhler}), we see that in $\CA$ kernels are stable under pushout and cokernels are stable under pullback, i.e.\ $\CA$ is quasi-abelian.
\end{proof}
%


\begin{acknowledgements}
The authors would like to thank Thomas Br{\"u}stle and Aran Tattar for sharing their results whilst \cite{BHT} was still in progress. 
This exchange of ideas was the trigger for the current paper. The authors would also like to thank the referee for their very helpful comments and suggestions. 
Furthermore, the first author is supported by the `th{\'e}sards {\'e}toiles' scholarship of the ISM, Bishop's University and  Universit{\'e} de Sherbrooke. 
The second author is grateful for financial support from the EPSRC grant EP/P016014/1 `Higher Dimensional Homological Algebra'. 
The third author would like to express his gratitude to the Institut des Hautes {\'E}tudes Scientifique for their hospitality during a stay in the summer 2019, where part of this research began and to Teesside University, where he carried out a majority of the work on this project.
\end{acknowledgements}

{\setstretch{1}

}
\end{document}